\numberwithin{equation}{section}
\theoremstyle{plain}
\newtheorem{theorem}{Theorem}[section]
\newtheorem{lemma}[theorem]{Lemma}
\newtheorem{proposition}[theorem]{Proposition}
\newtheorem{corollary}[theorem]{Corollary}
\theoremstyle{definition}
\newtheorem{definition}{Definition}[section]
\theoremstyle{remark}
\newtheorem*{remark}{Remark}
\newcommand{\norm}[1]{\left\|#1\right\|}
\newcommand{\abs}[1]{\left\vert#1\right\vert}
\newcommand{\spr}[1]{\left\langle\,#1\,\right\rangle}
\newcommand{\kl}[1]{\left(#1\right)}
\newcommand{\R}{\mathbb{R}} 
\newcommand{\N}{\mathbb{N}}
\newcommand{\mfunc}[1]{\text{#1} \,}
\newcommand{\sgn}{\mfunc{sgn}}
\newcommand{\xD}{x^\dagger}
\newcommand{\xkd}{x_k^\delta}
\newcommand{\yd}{y^{\delta}}
\newcommand{\eps}{\varepsilon}
\newcommand{\lt}{{\ell^2}}
\newcommand{\lo}{{\ell^1}}
\newcommand{\xade}{x_{\alpha,\eps}^\delta}
\newcommand{\Tad}{\mathcal{T}_{\alpha,\delta}}
\newcommand{\Jad}{\mathcal{J}_{\alpha,\delta}}
\newcommand{\Jade}{\mathcal{J}_{\alpha,\delta}^\eps}
\title{A note on the minimization of a Tikhonov functional with $\ell^1$-penalty}
\author{
Fabian Hinterer\footnote{Johannes Kepler University Linz, Institute of Industrial Mathematics, Altenbergerstra{\ss}e 69, A-4040 Linz, Austria (fabian.hinterer@indmath.uni-linz.ac.at), corresponding author.}, 
Simon Hubmer\footnote{Johann Radon Institute Linz, Altenbergerstra{\ss}e 69, A-4040 Linz, Austria, (simon.hubmer@ricam.oeaw.ac.at)}, 
Ronny Ramlau\footnote{Johannes Kepler University Linz, Institute of Industrial Mathematics, Altenbergerstra{\ss}e 69, A-4040 Linz, Austria, (ronny.ramlau@jku.at)} \footnote{Johann Radon Institute Linz, Altenbergerstra{\ss}e 69, A-4040 Linz, Austria, (ronny.ramlau@ricam.oeaw.ac.at)}
}
\begin{document}

\maketitle

\begin{abstract}
In this paper, we consider the minimization of a Tikhonov functional with an $\lo$ penalty for solving linear inverse problems with sparsity constraints. One of the many approaches used to solve this problem uses the Nemskii operator to transform the Tikhonov functional into one with an $\lt$ penalty term but a nonlinear operator. The transformed problem can then be analyzed and minimized using standard methods. However, by the nature of this transform, the resulting functional is only once continuously differentiable, which prohibits the use of second order methods. Hence, in this paper, we propose a different transformation, which leads to a twice differentiable functional that can now be minimized using efficient second order methods like Newton's method. We provide a convergence analysis of our proposed scheme, as well as a number of numerical results showing the usefulness of our proposed approach.

\smallskip
\noindent \textbf{Keywords.} Inverse and Ill-Posed Problems, Tikhonov Regularization, Sparsity, Second-Order Methods, Newton's Method 
\end{abstract}


\section{Introduction}

In this paper, we consider linear operator equations of the form
	\begin{equation}\label{Ax=y}
		Ax = y \,,
	\end{equation}
where $A \, : \, \lt \to \lt$ is a bounded linear operator on the (infinite-dimensional) sequence space $\lt$. Note that by using a suitable basis or frame, operator equations between separable function spaces such as $L^p$, Sobolev, or Besov spaces can all be transformed into problems of the form \eqref{Ax=y}. We assume that only noisy data $\yd$ satisfying
	\begin{equation}\label{y-yd}
		\norm{y - \yd}_2 \leq \delta
	\end{equation}
are available, where $\norm{.}_2$ denotes the standard $\lt$-norm. Problems of the form \eqref{Ax=y} arise in many practical applications including, but not limited to, image processing (compression, denoising, enhancement, inpainting, etc.), image reconstruction, as well as medical and tomographic imaging. For example, in the case in tomography, where $A$ is the Radon transform and $x$ is the internal density to be reconstructed from sinogram data $\yd$, the solution $x$ can be expected to have a sparse representation in a given basis. Hence, we are particularly interested in sparse solutions of \eqref{Ax=y}, to which end we consider the minimization of the following Tikhonov functional
	\begin{equation}\label{Tikhonov} 
		\Tad(x) := \norm{Ax-\yd}_2^2 + \alpha \norm{x}_1 \,,
	\end{equation}
where $\norm{.}_1$ denotes the standard $\lo$-norm. This problem has already been thoroughly studied analytically (compare with Section~\ref{section_regularization}) as well as numerically (see Section~\ref{section_minimization} for an overview of previously proposed methods). However, the efficient minimization of the Tikhonov functional $\Tad$ still remains a field of active study, especially since the presence of the $\lo$-norm makes the functional non-differentiable at the origin. One approach to circumvent this issue was proposed in \cite{Ramlau_Teschke_2006}, where the authors considered a transformation of the Tikhonov functional into one which is once differentiable. In this paper, we extend their transformation idea by using an approximate transformation approach in order to end up with a functional that is also twice differentiable. This then allows the application of efficient second-order iterative methods for carrying out the minimization.

This paper is organized as follows: In Section~\ref{section_regularization}, we review known regularization results concerning sparsity regularization via the Tikhonov functional \eqref{Tikhonov} and in Section~\ref{section_minimization}, we discuss some of the existing methods for its minimization. In Section~\ref{transformation_approach}, we consider the transformation approach presented in \cite{Ramlau_Zarzer_2012} and its extension for obtaining twice differentiable functionals, for which we provide a convergence analysis. Furthermore, in Section~\ref{numerical_experiments}, we present numerical simulations based on a tomography problem to demonstrate the usefulness of our approach. Finally, a conclusion is given in Section~\ref{sect_conclusion}.

\section{Sparsity Regularization} \label{section_regularization}
In this section, we recall some basic results (adapted from \cite[Section~3.3]{Scherzer_Grasmair_Grossauer_Haltmeier_Lenzen_2008}) concerning the regularization properties of Tikhonov regularization with sparsity constraints. For a more extensive review on regularization theory for Tikhonov functionals with sparsity constraints the reader is referred to \cite{Resmerita_2005, Ramlau_Resmerita_2010, Jin_Maass_2012}, and more recently, \cite{ Jin_Maass_Scherzer_2017, Hohage_Sprung_Weidling_2020}.

First of all, concerning the well-definedness of minimizers of $\Tad$ and their stability with respect to the data $\yd$, we get the following result, which is an immediate consequence of \cite[Theorem~3.48]{Scherzer_Grasmair_Grossauer_Haltmeier_Lenzen_2008}: 
\begin{theorem}
Let $A \, : \, \lt \to \lt$ be weakly sequentially continuous, $\alpha > 0$ and $\yd \in \lt$. Then there exists a minimizer of the functional $\Tad$ defined in \eqref{Tikhonov}. Furthermore, the minimzation is weakly subsequentially stable with respect to the noisy data $\yd$.
\end{theorem}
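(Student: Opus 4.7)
The plan is to use the direct method of the calculus of variations, as is standard for Tikhonov-type functionals. The ingredients one typically needs are: coercivity of the functional (to extract a bounded minimizing sequence), weak sequential compactness of bounded sets in the underlying space, and weak lower semicontinuity of both the discrepancy and the penalty. Since the statement is advertised as an immediate consequence of \cite[Theorem~3.48]{Scherzer_Grasmair_Grossauer_Haltmeier_Lenzen_2008}, the task is essentially to check that the framework of that general theorem applies, with $\X = \Y = \lt$ (equipped with weak topology) and penalty $\Omega(x) = \norm{x}_1$.

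For existence, I would proceed as follows. First, note that $\Tad$ is bounded below by $0$, so a minimizing sequence $(x_n) \subset \lt$ exists. Since $\alpha \norm{x_n}_1 \leq \Tad(x_n) \leq \Tad(0) + 1 = \norm{\yd}_2^2 + 1$ for $n$ large, the sequence is uniformly bounded in $\lo$, and since $\norm{x_n}_2 \leq \norm{x_n}_1$, it is uniformly bounded in $\lt$ as well. By the Banach--Alaoglu theorem there exists a subsequence, still denoted $(x_n)$, and some $\bar{x} \in \lt$ with $x_n \wto \bar{x}$ in $\lt$. By the assumed weak sequential continuity of $A$, we have $Ax_n \wto A\bar{x}$ in $\lt$, so by weak lower semicontinuity of the norm,
\begin{equation*}
    \norm{A\bar{x}-\yd}_2^2 \leq \liminf_{n\to\infty}\norm{Ax_n - \yd}_2^2 \,.
\end{equation*}
Combining this with weak lower semicontinuity of $\norm{\cdot}_1$ on $\lt$ (discussed in the last paragraph) gives $\Tad(\bar{x}) \leq \liminf_{n\to\infty} \Tad(x_n) = \inf \Tad$, so $\bar{x}$ is a minimizer.

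For weak subsequential stability, consider a sequence $\yd_k \to \yd$ in $\lt$ and corresponding minimizers $x_k$ of $\mathcal{T}_{\alpha,\delta_k}$. By comparing $\mathcal{T}_{\alpha,\delta_k}(x_k) \leq \mathcal{T}_{\alpha,\delta_k}(0) = \norm{\yd_k}_2^2$, one gets a uniform bound on $\norm{x_k}_1$ (and hence on $\norm{x_k}_2$) along the sequence. Passing to a weakly convergent subsequence $x_k \wto \tilde{x}$ and running the same lower-semicontinuity argument as above, combined with the convergence $\norm{Ax_k - \yd_k}_2 - \norm{Ax_k-\yd}_2 \to 0$, shows that $\tilde x$ minimizes $\Tad$, which is the weak subsequential stability claim.

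The one non-obvious point, which is really the only technical step, is the weak lower semicontinuity of $\Omega(x) := \norm{x}_1$ viewed as a map from $\lt$ into $[0,+\infty]$ (with $\Omega(x) = +\infty$ for $x \in \lt \setminus \lo$). This follows because $\Omega$ is convex and strongly lower semicontinuous on $\lt$ (one can verify lower semicontinuity coordinatewise via Fatou, since $\norm{x}_1 = \sup_N \sum_{j=1}^N |x_j|$ is a supremum of continuous convex functions), and convex, strongly lower semicontinuous functionals on a Banach space are automatically weakly sequentially lower semicontinuous by Mazur's lemma. With this in hand, both existence and stability follow from the chain of inequalities sketched above, and no further ingredients are needed.
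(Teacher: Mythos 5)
Your proof is correct and takes essentially the same route as the paper, which gives no argument of its own but simply invokes \cite[Theorem~3.48]{Scherzer_Grasmair_Grossauer_Haltmeier_Lenzen_2008}; what you have written is exactly the direct-method proof behind that general result, specialized to $\X=\Y=\lt$ with penalty $\norm{\cdot}_1$. All the needed ingredients --- the bound $\norm{x}_2\le\norm{x}_1$ for coercivity in $\lt$, weak sequential compactness of bounded sets in $\lt$, weak sequential continuity of $A$, and weak sequential lower semicontinuity of the extended $\lo$-norm via convexity plus strong lower semicontinuity --- are identified and justified correctly.
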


Concerning the convergence of the minimizers of the Tikhonov functional, we get the following theorem, which follows directly from \cite[Theorem~3.49]{Scherzer_Grasmair_Grossauer_Haltmeier_Lenzen_2008}:
\begin{theorem}
Let $A \, : \, \lt \to \lt$ be weakly sequentially continuous, assume that the problem \eqref{Ax=y} has a solution in $\lo$, and let $\alpha(\delta) :  (0,\infty) \to (0, \infty) $ be chosen such that
	\begin{equation}\label{cond_alpha_delta}
		\alpha(\delta) \to 0 \,,
		\quad
		\text{and}
		\quad
		\frac{\delta^2}{\alpha(\delta)} \to 0 \,,
		\quad
		\text{as}
		\quad
		\delta \to 0 \,.
	\end{equation}
Moreover, assume that the sequence $\delta_k$ converges to $0$, that $y_k := y^{\delta_k}$ satisfies the estimate $\norm{y-y_k}_2\leq\delta_k$, and that $x_k$ is a sequence of elements minimizing $\mathcal{T}_{\alpha(\delta_k),y_k}$. Then there exists an $\lo$-minimum-norm solution $x^\dagger$ and a subsequence $x_{k_n}$ of $x_k$ such that $\norm{x_{k_n} - x^\dagger}_2 \to 0$ as $n \to \infty$. Furthermore, if the $\lo$-minimum-norm solution $x^\dagger$ is unique, then $\norm{x_k - x^\dagger}_2 \to 0$ as $k \to \infty$.
\end{theorem}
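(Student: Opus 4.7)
The plan is to follow the standard stability-via-minimizing-property argument, exploiting the hypothesis that an $\lo$-solution exists. First I would use the minimizing property of $x_k$ against the $\lo$-minimum-norm solution $\xD$ (whose existence is guaranteed since, among all $\lo$-solutions, the $\lo$-norm attains its infimum by weak lower semicontinuity and boundedness of minimizing sequences):
\begin{equation*}
\norm{Ax_k - y_k}_2^2 + \alpha(\delta_k)\norm{x_k}_1 \,\leq\, \norm{A\xD - y_k}_2^2 + \alpha(\delta_k)\norm{\xD}_1 \,\leq\, \delta_k^2 + \alpha(\delta_k)\norm{\xD}_1.
\end{equation*}
Dividing by $\alpha(\delta_k)$ and invoking \eqref{cond_alpha_delta} gives $\limsup_k \norm{x_k}_1 \leq \norm{\xD}_1$ and $\norm{Ax_k - y_k}_2 \to 0$, so $Ax_k \to y$ strongly in $\lt$.

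Next, since $\{x_k\}$ is bounded in $\lo$ it is bounded in $\lt$, so reflexivity yields a subsequence $x_{k_n} \wto \bar x$ in $\lt$. The weak sequential continuity of $A$ together with $Ax_{k_n} \to y$ forces $A\bar x = y$, and weak lower semicontinuity of $\norm{\cdot}_1$ on $\lt$ gives $\norm{\bar x}_1 \leq \liminf_n \norm{x_{k_n}}_1 \leq \norm{\xD}_1$, so $\bar x$ is itself an $\lo$-minimum-norm solution. In particular, $\norm{x_{k_n}}_1 \to \norm{\bar x}_1$.

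The step I expect to be the main obstacle is upgrading weak $\lt$-convergence to strong $\lt$-convergence, as a generic reflexive-space Tikhonov theory delivers only the weak statement. The route I would take is to combine the componentwise convergence (which follows from weak $\lt$-convergence tested against the canonical basis) with the norm convergence $\norm{x_{k_n}}_1 \to \norm{\bar x}_1$; by a Brezis--Lieb/Radon--Riesz argument tailored to $\lo$, this implies strong $\lo$-convergence, and the continuous embedding $\lo \hookrightarrow \lt$ then yields $\norm{x_{k_n} - \bar x}_2 \to 0$. This is precisely the point at which the sparsity-inducing structure of the penalty is used in an essential way, rather than mere reflexivity.

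Finally, for the uniqueness clause I would invoke the usual subsequence-of-subsequences principle: if $\xD$ is the unique $\lo$-minimum-norm solution, then the reasoning above applied to any subsequence of $\{x_k\}$ produces a further subsequence converging strongly to $\xD$, which forces convergence of the whole sequence, completing the proof.
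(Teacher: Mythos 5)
Your argument is correct. Note, however, that the paper does not prove this theorem at all: it is stated as following ``directly from [Scherzer et al., Theorem~3.49]'', so there is no in-paper proof to compare against. What you have written is essentially a self-contained reconstruction of the standard proof behind that citation: minimizing property against $\xD$ to get $\limsup_k \norm{x_k}_1 \leq \norm{\xD}_1$ and $Ax_k \to y$, weak $\lt$-compactness plus weak sequential continuity of $A$ and weak lower semicontinuity of $\norm{\cdot}_1$ to identify the weak limit as an $\lo$-minimum-norm solution, and then the Radon--Riesz/Brezis--Lieb property of $\lo$ (componentwise convergence together with $\norm{x_{k_n}}_1 \to \norm{\bar x}_1$ implies $\norm{x_{k_n}-\bar x}_1 \to 0$) combined with $\norm{\cdot}_2 \leq \norm{\cdot}_1$ to upgrade to strong $\lt$-convergence. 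You correctly single out this last step as the place where the $\lo$-structure is used in an essential way --- it is exactly why one gets strong rather than merely weak subsequential convergence here, as the paper remarks after the theorem. The existence of $\xD$ via a minimizing sequence and the subsequence-of-subsequences argument for the uniqueness clause are both handled properly, so there is no gap.
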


Note that typically, one only gets weak subsequential convergence of the minimizers of the Tikhonov functional to the minimum-norm solution. However, the above theorem shows that for sparsity regularization, one even gets strong subsequential convergence.

Furthermore, note that if $A$ is injective, the $\lo$-minimizing solution is sparse (i.e., only finitely many of its coefficients are non-zero) and satisfies a variational source condition, then it is possible to prove optimal convergence rates under the a-priori parameter choice $\alpha(\delta) \sim \delta$, both in Bregman distance and in norm \cite[Theorem~3.54]{Scherzer_Grasmair_Grossauer_Haltmeier_Lenzen_2008}.

\section{Minimization of the Tikhonov functional} \label{section_minimization}

In this section, we review some of the previously proposed methods for the minimization of \eqref{Tikhonov}. Due to the non-differentiability of the $\lo$-norm in zero, this minimization problem is a non-trivial task.  

Among the first and perhaps the most well-known method is the so-called \emph{Iterative Shrinkage Thresholding Algorithm (ISTA)}, proposed in \cite{Daubechies_Defrise_DeMol_2004}. Each iteration of this algorithm consists of a gradient-descent step applied to the residual functional, followed by a thresholding step, which leads to the iterative procedure
	\begin{equation}\label{ISTA}
		x_{k+1}^\delta  = S_{\alpha \omega} \kl{\xkd- \omega A^* \kl{A\xkd - y^\delta}} \, ,
	\end{equation}
where $S_{\alpha \omega}$ denotes the component-wise thresholding (shrinkage) operator
	\begin{equation*}
		\kl{S_{\alpha \omega}(x)}_k := \sgn(x_k) \max\{ \vert x_k \vert - \alpha \omega,0 \} \, . 
	\end{equation*}
It was shown that the iterates generated by ISTA converge to a minimizer of the Tikhonov functional \eqref{Tikhonov} under suitable assumptions  \cite{Daubechies_Defrise_DeMol_2004, Bredies_Lorenz_2008}. Unfortunately, this converge can be very slow, which motivated the introduction of \emph{Fast ISTA (FISTA)} in \cite{Beck_Teboulle_2009}. Based on Nesterov's acceleration scheme \cite{Nesterov_1983}, the iterates of FISTA are defined by
	\begin{equation}\label{FISTA}  
	\begin{split}
		x_{k}^\delta &=S_{\alpha \omega} \big( z_{k-1}^\delta- \omega A^* \big(Az_{k-1}^\delta-y^\delta \big) \big) \,,
		\qquad
		t_k = \tfrac{1+\sqrt{1+4 t_{k-1}^2}}{2} \,,
		\\
		z_k^\delta &= x_k^\delta+ \Big(\tfrac{t_{k -1}-1}{t_k} \Big) (x_k^\delta-x_{k-1}^\delta) \,,
		\qquad
		z_0^\delta =x_0 \,, \quad t_0 = 1 \,.
	\end{split}
	\end{equation} 
The convergence analysis presented in \cite{Beck_Teboulle_2009} as well as many numerical experiments show that the iterates of FISTA converge much faster than those of ISTA, the residual converging with a rate of $O(1/k^2)$ for FISTA compared to $O(1/k$) for ISTA, hence making it more practical. This speedup also holds for a generalized version of FISTA, which is applicable to composite (convex) minimization problems \cite{Attouch_Peypouquet_2016}.  Applied to problem \eqref{Tikhonov}, it has the same form as \eqref{FISTA}, but with the computation of $z_k^\delta$ replaced by
	\begin{equation*}
		z_{k}^\delta = x_k^\delta + \tfrac{k-1}{k + \beta -1} \kl{x_k^\delta - x_{k-1}^\delta} \,, 
	\end{equation*}
where the choice of $\beta = 3$ is common practice. The convergence of this method also for any other choice of $\beta > 3$ was established in \cite{Attouch_Peypouquet_2016}. 

In the context of compressed sensing, where one tries to recover signals from incomplete and inaccurate measurements in a stable way, minimization problems of the form \eqref{Tikhonov} have been analyzed and numerically treated in finite dimensions (see e.g.\ \cite{Candes_Romberg_Tao_2006, Donoho_Tanner_2005, Daubechies_DeVore_Fornasier_Gunturk_2010}). Also in finite dimensions, the minimization problem \eqref{Tikhonov} has been tackled sucessfully by using various Krylov-subspace techniques (see e.g.\ \cite{Buccini_Reichel_2019, Lanza_Morigi_Reichel_Sgallari_2015, Huang_Lanza_Morigi_Reichel_Sgallari_2017}).

In infinite dimensions, a number of different minimization algorithms for \eqref{Tikhonov} have been proposed. For example, the authors of \cite{Ramlau_Teschke_2006, Ramlau_Teschke_2005, Ramlau_Teschke_2010} have proposed a surrogate functional approach, while the authors of \cite{Bredies_Lorenz_Maass_2009, Bonesky_Bredies_Lorenz_Maass_2007} and \cite{Griesse_Lorenz_2008} have proposed conditional gradient and semi-smooth Newton methods, respectively. 

Of particular interest to us is the minimization approach presented in \cite{Ramlau_Zarzer_2012, Zarzer_2009}, which we discuss in detail in Section~\ref{transformation_approach} below. It is based on a nonlinear transformation utilizing a Nemskii operator, which turns the Tikhonov functional \eqref{Tikhonov} into one with a standard $\lt$-norm penalty, but with a nonlinear operator. Since the resulting transformed functional is continuously Fr\'echet differentiable, one can use standard first-order iterative methods for its minimization. Unfortunately, the functional is not twice differentiable, which prohibits the use of second-order methods, known for their efficiency. Circumventing this shortcoming is the motivation for the minimization approach based on an approximate transformation presented below.

\section{Transformation Approach} \label{transformation_approach} 
The concept of approximating a nonsmooth operator with a convergent sequence of smooth operators has been used before, e.g., in \cite{Acar_Vogel_1994} in the context of BV regularization. In the related setting where only an inexact forward operator is known, convergence of the resulting approximate solutions as the the uncertainty in the forward operator and the data decreases has been studied e.g., in \cite{Korolev_Lellmann_2018}.  
As described above, the authors of \cite{Ramlau_Zarzer_2012, Zarzer_2009} considered a transformation approach for minimizing the Tikhonov functional \eqref{Tikhonov}. This approach is based on a nonlinear transformation of the functional using the Nemskii operator
	\begin{equation}\label{def_N_p_q} 
	\begin{split}
		N_{p,q} \, : \,  (x_k)_{k \in \N}  \mapsto \kl{ \eta_{p,q}(x_k) }_{k \in \N} \,,
	\end{split}
	\end{equation}
where the function $\eta_{p,q}$ is defined by
	\begin{equation}\label{def_eta_p_q} 
		\eta_{p,q} \, : \, \R \to \R \,,
		\quad  		
		\tau \mapsto \sgn(\tau) \abs{\tau}^\frac{q}{p} \,.
	\end{equation} 
The operator $N_{p,q}$ has for example been used in the context of maximum entropy regularization \cite{Engl_Landl_1993}. Since here we need it only for the special case $p=1$ and $q=2$, we now define the operator
	\begin{equation}\label{def_N} 
	\begin{split}
		N \, : \, \lt \to \lo \,,
		\qquad
		x \mapsto N_{1,2}(x)  \,,
	\end{split}
	\end{equation}
and the function
	\begin{equation}\label{def_eta} 
		\eta \, : \, \R \to \R \,,
		\quad  		
		\tau \mapsto \eta_{1,2}(\tau) \,.
	\end{equation} 
The operator $N$ is continuous, bounded, bijective, and Fr{\'e}chet differentiable with
	\begin{equation} 
		N'(x)h = \kl{2 \abs{x_k}h_k }_{k \in \N} \,,
	\end{equation}
and is used to define the following nonlinear operator
	\begin{equation}\label{def_F}
		F \, : \, \lt \to \lt \,,
		\qquad
		x \mapsto (A \circ N)(x) \,.
	\end{equation}
This is then used to transform the problem of minimizing \eqref{Tikhonov} into a standard $\lt - \lt$ minimization problem, as shown by the following result from \cite{Ramlau_Zarzer_2012}:
\begin{proposition}
The following two problems are equivalent: 
\begin{enumerate}
\item Find $x^* \in \lo$, such that $x^*$ minimizes
	\begin{equation} \label{def_g}
		\Tad(x) = \norm{ Ax- \yd}_2^2 + \alpha \norm{x}_1 \,.
	\end{equation} 
\item Find $x^*= N(\tilde{x})$, such that $\tilde{x} \in \lt$ minimizes 
	\begin{equation} \label{def_Jad} 
		\Jad(x) :=  \norm{F(x) - \yd }_2^2 + \alpha \norm{x}_2^2 \,.
	\end{equation} 
\end{enumerate}
\end{proposition}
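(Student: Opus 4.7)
The plan is to reduce the equivalence to two elementary facts: the bijectivity of $N \colon \lt \to \lo$ (which is asserted in the excerpt just before the proposition), and a pointwise identity between the two functionals under the change of variables $x = N(\tilde{x})$. These together force the infima of $\Tad$ over $\lo$ and of $\Jad$ over $\lt$ to coincide and their argmin sets to correspond via $N$.

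First I would record the key computation. For the exponents $p=1$, $q=2$, the definition \eqref{def_eta} gives $\eta(\tau) = \sgn(\tau)\abs{\tau}^{2}$, so $\abs{\eta(\tau)} = \tau^{2}$. Consequently, for any $\tilde{x} \in \lt$,
\begin{equation*}
  \norm{N(\tilde{x})}_{1} \;=\; \sum_{k \in \N} \abs{\eta(\tilde{x}_{k})} \;=\; \sum_{k \in \N} \tilde{x}_{k}^{\,2} \;=\; \norm{\tilde{x}}_{2}^{\,2}.
\end{equation*}
In particular this verifies that $N$ indeed maps $\lt$ into $\lo$ (consistent with \eqref{def_N}). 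Combined with the definition \eqref{def_F} of $F = A \circ N$, this immediately yields the pointwise identity
\begin{equation*}
  \Tad\bigl(N(\tilde{x})\bigr) \;=\; \norm{A N(\tilde{x}) - \yd}_{2}^{\,2} + \alpha \norm{N(\tilde{x})}_{1} \;=\; \norm{F(\tilde{x}) - \yd}_{2}^{\,2} + \alpha \norm{\tilde{x}}_{2}^{\,2} \;=\; \Jad(\tilde{x}),
\end{equation*}
valid for every $\tilde{x} \in \lt$.

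Next I would exploit bijectivity. For the direction (1)$\Rightarrow$(2), let $x^{*} \in \lo$ minimize $\Tad$ and set $\tilde{x} := N^{-1}(x^{*}) \in \lt$. For any $\tilde{z} \in \lt$, the identity gives $\Jad(\tilde{z}) = \Tad(N(\tilde{z}))$ and $N(\tilde{z}) \in \lo$, so $\Jad(\tilde{z}) \geq \Tad(x^{*}) = \Jad(\tilde{x})$; hence $\tilde{x}$ minimizes $\Jad$ over $\lt$ and $x^{*} = N(\tilde{x})$. For the converse (2)$\Rightarrow$(1), if $\tilde{x} \in \lt$ minimizes $\Jad$ and $x^{*} := N(\tilde{x})$, then for any $z \in \lo$ we set $\tilde{z} := N^{-1}(z) \in \lt$ and obtain $\Tad(z) = \Jad(\tilde{z}) \geq \Jad(\tilde{x}) = \Tad(x^{*})$, so $x^{*}$ minimizes $\Tad$ over $\lo$.

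I do not anticipate a real obstacle here: the statement is essentially a restatement of the authors' result from \cite{Ramlau_Zarzer_2012}, and the whole argument is powered by the pointwise identity $\norm{N(\tilde{x})}_{1} = \norm{\tilde{x}}_{2}^{\,2}$ together with bijectivity of $N$. The only point worth emphasizing is that the bijection $N \colon \lt \to \lo$ makes the change of variables legitimate in both directions, so that no $x \in \lo$ and no $\tilde{x} \in \lt$ is missed when comparing infima, and the correspondence $x^{*} \leftrightarrow \tilde{x}$ between minimizers is one-to-one.
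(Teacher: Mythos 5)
Your proof is correct and is essentially the argument behind the cited result: the paper itself gives no proof (it only refers to \cite{Ramlau_Zarzer_2012}), and the standard argument there is exactly the pointwise identity $\norm{N(\tilde{x})}_{1} = \norm{\tilde{x}}_{2}^{2}$ combined with the bijectivity of $N \colon \lt \to \lo$, which you use to transport minimizers in both directions. No gaps.
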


Due to the above proposition, both the original and the transformed problem recover the same solution, which thus have the same sparsity properties. Note that the operator $F$ is nonlinear even if $A$ is linear. However, using the transformed operator has the advantage that the resulting functional $\Jad$ is differentiable.

\begin{proposition}
The operator $F$ and the functional $\Jad$ defined in \eqref{def_F} and \eqref{def_Jad}, respectively, are continuously Fr{\'e}chet differentiable, with
	\begin{equation*} 
		F'(x)h = A N'(x)h \,,
		\qquad 
		\text{and}
		\qquad
		\Jad'(x) h= \spr{2F '(x)^*(F(x)-\yd) + 2\alpha x,h}  \,.
	\end{equation*}
\end{proposition}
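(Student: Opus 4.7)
The plan is to handle $F = A \circ N$ via the chain rule, so the main work is establishing Fréchet differentiability and continuous differentiability of the Nemskii operator $N \colon \lt \to \lo$. Once that is done, the derivative of $\Jad$ is a routine application of the chain rule and the derivative of $\norm{\cdot}_2^2$.

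\textbf{Step 1: Differentiability of $N$.} The scalar function $\eta(\tau) = \sgn(\tau)\abs{\tau}^2 = \tau\abs{\tau}$ is $C^1$ on $\R$ with $\eta'(\tau) = 2\abs{\tau}$, and $\eta'$ is globally Lipschitz with constant $2$ since $\big|\,\abs{a}-\abs{b}\,\big| \leq \abs{a-b}$. I would use this to obtain the componentwise estimate
\begin{equation*}
\abs{\eta(x_k+h_k) - \eta(x_k) - 2\abs{x_k}h_k} \leq \int_0^1 \abs{\eta'(x_k+th_k) - \eta'(x_k)}\abs{h_k}\,dt \leq \abs{h_k}^2 \,.
\end{equation*}
Summing over $k$ yields $\norm{N(x+h) - N(x) - N'(x)h}_1 \leq \norm{h}_2^2$, which is $\Landauo(\norm{h}_2)$, so $N$ is Fréchet differentiable from $\lt$ to $\lo$ with the claimed derivative. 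The boundedness of $N'(x) \colon \lt \to \lo$ follows from Cauchy--Schwarz: $\norm{N'(x)h}_1 = 2\sum_k \abs{x_k}\abs{h_k} \leq 2\norm{x}_2\norm{h}_2$.

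\textbf{Step 2: Continuity of $N'$.} For $x,\tilde{x} \in \lt$ and any $h \in \lt$, Cauchy--Schwarz gives
\begin{equation*}
\norm{(N'(x) - N'(\tilde{x}))h}_1 = 2\sum_k \big|\,\abs{x_k}-\abs{\tilde{x}_k}\,\big|\abs{h_k} \leq 2\norm{x-\tilde{x}}_2\norm{h}_2 \,,
\end{equation*}
so $N' \colon \lt \to \mathcal{L}(\lt,\lo)$ is Lipschitz, in particular continuous. Because $A \colon \lt \to \lt$ is bounded linear (hence its own derivative and trivially continuously differentiable), the chain rule yields $F \in C^1(\lt,\lt)$ with $F'(x)h = A N'(x)h$, and $F'$ inherits continuity from $N'$ since $\norm{F'(x) - F'(\tilde{x})} \leq \norm{A}\norm{N'(x) - N'(\tilde{x})}$.

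\textbf{Step 3: Differentiability of $\Jad$.} The map $x \mapsto \norm{F(x)-\yd}_2^2$ is the composition of $F - \yd$ with the squared-norm functional on $\lt$, whose Fréchet derivative at $v$ is $h \mapsto 2\spr{v,h}$. Chain rule gives derivative $h \mapsto 2\spr{F(x)-\yd, F'(x)h} = 2\spr{F'(x)^*(F(x)-\yd),h}$. The quadratic term $\alpha\norm{x}_2^2$ contributes $h \mapsto 2\alpha\spr{x,h}$. Adding these yields the stated formula for $\Jad'(x)h$, and continuity of $\Jad'$ follows from continuity of $F$, $F'$, and the adjoint operation.

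The only nonstandard point is Step~1: one must respect the codomain $\lo$, so the remainder must be controlled in the $\lo$-norm while the increment $h$ lives in $\lt$. This is the step where the specific exponents $p=1$, $q=2$ of $\eta$ matter, since the componentwise bound $\abs{h_k}^2$ is exactly summable when $h \in \lt$; a general $\eta_{p,q}$ would require a more delicate argument.
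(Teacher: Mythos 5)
Your proof is correct and follows the same route as the paper's: the chain rule applied to $F = A\circ N$ together with the derivative of the squared norm, where the paper's own proof is a one-line appeal to the linearity of $A$ and the (previously stated) differentiability of $N$. The only difference is that you supply the substance the paper takes as given from \cite{Ramlau_Zarzer_2012}, namely a self-contained verification that $N$ is $C^1$ from $\lt$ to $\lo$ — your remainder bound $\norm{N(x+h)-N(x)-N'(x)h}_1 \leq \norm{h}_2^2$ and the Lipschitz estimate for $N'$ are both correct, and your closing observation about why the exponents $p=1$, $q=2$ make the $\lo$-estimate work is accurate.
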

\begin{proof}
This is an immediate consequence of the definition of $\Jad$ and the fact that $A$ is linear and $N$ is differentiable.
\end{proof}
Due to the above result, it is now possible to apply gradient based (iterative) methods for minimizing the transformed functional $\Jad$, and thus to compute a minimizer of the functional $\Tad$, which itself is not differentiable. 

Unfortunately, the transformed functional $\Jad$ is not twice differentiable, due to the fact that $N$ is not twice differentiable (at zero). This prohibits the use of second order methods like Newton's method, which are known to be very efficient in terms of iteration numbers. Hence, we propose to approximate $N$ by a sequence of operators $N_\eps$ which are twice continuously differentiable, and to minimize, instead of $\Jad$, the functional
	\begin{equation}\label{def_Jade}
		\Jade(x) := \norm{F_\eps(x) - \yd}_2^2 + \alpha \norm{x}_2^2 \,,
	\end{equation}
where we define the operator $F_\eps$ by
	\begin{equation}\label{def_F_eps}
		F_\eps \, : \, \lt \to \lt \,, 
		\quad 
		x \mapsto (A \circ N_\eps)(x) \,,
	\end{equation}
for a suitable approximation $N_\eps$ of the operator $N$. This approximation is based on suitable approximations $\eta_\eps$ of the functions $\eta$, which we introduce in the following
\begin{definition}
For $\eps > 0$ we define functions $\eta_\eps  : \R \to \R$ by
	\begin{equation}\label{def_eta_eps}
		\eta_\eps(\tau) := 
		\begin{cases} 
		-\tau^2 -\tfrac{1}{3} \eps^2 \,, &\tau \in  (-\infty, -\eps)  \,, \\
		\frac{1}{3\eps}\tau^3+\eps \tau \,,  &\tau \in [-\eps, \eps] \,, \\ 
		\tau^2+\tfrac{1}{3} \eps^2 \,,  &\tau \in  (\eps, \infty) \,.
		\end{cases}  
	\end{equation}
\end{definition}
\begin{figure} \centering \includegraphics[scale=0.6]{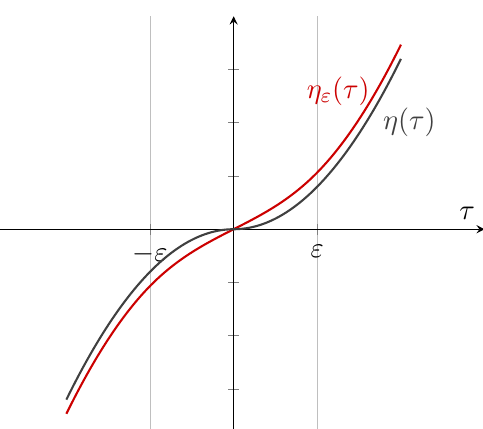}
\caption{Comparison of the transformation functions $\eta_\eps$ and $\eta$.}
\end{figure}
Obviously, $\eta_\eps \to \eta$ as $\eps \to 0$ and furthermore, we get the following
\begin{lemma}\label{lem_eta_eps_diff}
The functions $\eta_\eps$ defined by \eqref{def_eta_eps} are twice continuously differentiable. 
\end{lemma}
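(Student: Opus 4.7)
The plan is entirely elementary: on each of the three open intervals appearing in the definition \eqref{def_eta_eps}, the function $\eta_\eps$ is a polynomial in $\tau$, hence $C^\infty$ in the interior of each piece. Therefore the whole question reduces to checking that the function values, first derivatives, and second derivatives match from the two sides at the junction points $\tau = \eps$ and $\tau = -\eps$.

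First I would compute, piecewise, the first and second derivatives. Explicitly, one obtains
\begin{equation*}
\eta_\eps'(\tau) =
\begin{cases}
-2\tau, & \tau < -\eps,\\
\tfrac{1}{\eps}\tau^2 + \eps, & -\eps \le \tau \le \eps,\\
2\tau, & \tau > \eps,
\end{cases}
\qquad
\eta_\eps''(\tau) =
\begin{cases}
-2, & \tau < -\eps,\\
\tfrac{2}{\eps}\tau, & -\eps \le \tau \le \eps,\\
2, & \tau > \eps.
\end{cases}
\end{equation*}
Next I would verify the matching at $\tau = \eps$: the cubic piece evaluated at $\eps$ gives $\tfrac{1}{3\eps}\eps^3 + \eps\cdot\eps = \tfrac{4}{3}\eps^2$, which coincides with $\eps^2 + \tfrac{1}{3}\eps^2$ from the right; similarly the first derivative yields $2\eps$ from both sides, and the second derivative yields $2$ from both sides. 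The same verification at $\tau = -\eps$ gives $-\tfrac{4}{3}\eps^2$, $2\eps$, and $-2$ as the common one-sided values, respectively.

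Since the values and the first and second derivatives all agree from the left and the right at both breakpoints, the functions $\eta_\eps$, $\eta_\eps'$, and $\eta_\eps''$ are continuous on all of $\R$, which is exactly the claim. There is no genuine obstacle in this argument; the only thing to watch out for is the asymmetric sign in the outer pieces, which is precisely what forces the choice of the cubic spline $\tfrac{1}{3\eps}\tau^3 + \eps\tau$ in the middle so that the $C^2$ matching works simultaneously at both $\pm\eps$. In fact, this matching is the reason for the specific constants $\tfrac{1}{3\eps}$ and $\eps$ in the cubic piece and the additive constant $\tfrac{1}{3}\eps^2$ in the outer pieces.
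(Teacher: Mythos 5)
Your proof is correct and takes the same route as the paper: compute the piecewise first and second derivatives and observe that they are continuous across the breakpoints $\pm\eps$. You merely carry out the matching computations at $\tau = \pm\eps$ explicitly, which the paper leaves to the reader.
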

\begin{proof}
It follows from  its definition that $\eta_\eps$ is everywhere continuous and that
	\begin{equation*}
		\eta'_\eps(\tau) := 
		\begin{cases} 
		-2\tau \,, &\tau \in  (-\infty, -\eps)  \,, \\
		\frac{1}{\eps}\tau^2 + \eps \,,  &\tau \in [-\eps, \eps] \,, \\ 
		2 \tau\,,  &\tau \in  (\eps, \infty) \,.
		\end{cases}  
	\end{equation*} 
Again it follows that $\eta'_{\eps}$ is everywhere continuous and that
	\begin{equation*}
		\eta''_\eps(\tau) := 
		\begin{cases} 
		-2 \,, &\tau \in  (-\infty, -\eps)  \,, \\
		\frac{2}{\eps}\tau \,,  &\tau \in [-\eps, \eps] \,, \\ 
		2 \,,  &\tau \in  (\eps, \infty) \,,
		\end{cases}  
	\end{equation*} 	
which is again continuous everywhere, which concludes the proof.
\end{proof}

We now use the functions $\eta_\eps$ to build the operators $N_\eps$ via the following
\begin{definition}
For all $\eps > 0$ we define the operators
	\begin{equation}\label{def_N_eps}
		N_\eps  :  \lt \to \lt \,,
		\qquad 
		(x_k)_{k\in\N} \mapsto \kl{\eta_\eps(x_k)}_{k \in \N} \,.
	\end{equation}
\end{definition}

Concerning the well-defined and boundedness of $N_\eps$, we have the following
\begin{lemma}\label{lem_Neps_bounded}
The operators $N_\eps $ defined by \eqref{def_N_eps} satisfy
	\begin{equation}
		\norm{N_\eps(x)}_2 
		\leq
		\norm{x}_2 \sqrt{\tfrac{16}{9}\eps^2+ 2 \norm{x}_2^2}  \,,
	\end{equation}
and are therefore well-defined as operators from $\lt \to \lt$.
\end{lemma}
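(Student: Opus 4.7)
The plan is to reduce the claim to a pointwise estimate for the scalar function $\eta_\eps$, and then sum over coordinates. Since the three branches of $\eta_\eps$ in \eqref{def_eta_eps} are symmetric in $\tau$, it suffices to study the two cases $|\tau|\leq \eps$ and $|\tau|>\eps$. The key pointwise inequality I will aim for is
\begin{equation*}
    |\eta_\eps(\tau)|^2 \;\leq\; \tau^2\kl{\tfrac{16}{9}\eps^2 + 2\tau^2}
    \qquad \text{for all } \tau \in \R.
\end{equation*}
Granting this for the moment, the lemma follows immediately by summing over $k$:
\begin{equation*}
    \norm{N_\eps(x)}_2^2 \;=\; \sum_{k\in\N} |\eta_\eps(x_k)|^2
    \;\leq\; \tfrac{16}{9}\eps^2 \sum_{k\in\N} x_k^2 \;+\; 2 \sum_{k\in\N} x_k^4,
\end{equation*}
and then bounding $\sum_k x_k^4 \leq (\sum_k x_k^2)^2 = \norm{x}_2^4$, which is the standard embedding $\ell^2 \hookrightarrow \ell^4$. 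Taking square roots yields the claimed estimate, and well-definedness of $N_\eps\colon\lt \to \lt$ is then immediate.

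For the pointwise bound in the outer branch $|\tau|>\eps$, I would simply expand $|\eta_\eps(\tau)|^2 = (\tau^2 + \tfrac{1}{3}\eps^2)^2 = \tau^4 + \tfrac{2}{3}\eps^2\tau^2 + \tfrac{1}{9}\eps^4$ and compare with $\tau^2(\tfrac{16}{9}\eps^2 + 2\tau^2)$; the difference reduces to $\tau^4 + \tfrac{10}{9}\eps^2\tau^2 - \tfrac{1}{9}\eps^4$, which is manifestly nonnegative since $\tau^4 > \eps^4 > \tfrac{1}{9}\eps^4$ in this regime. For the inner branch $|\tau|\leq \eps$, I would factor out $\tau^2$ to obtain $|\eta_\eps(\tau)|^2 = \tau^2(\tfrac{1}{3\eps}\tau^2 + \eps)^2$ and expand the square as $\tfrac{1}{9\eps^2}\tau^4 + \tfrac{2}{3}\tau^2 + \eps^2$. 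Using $\tau^2 \leq \eps^2$ on all three terms bounds this by $\tfrac{1}{9}\eps^2 + \tfrac{2}{3}\eps^2 + \eps^2 = \tfrac{16}{9}\eps^2$, which is already tighter than what is needed.

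The only mildly subtle step is the choice of the right pointwise bound: one cannot bound $|\eta_\eps(\tau)|$ by a single polynomial in $\tau$ with the same constant everywhere, so the additive split $\tfrac{16}{9}\eps^2 + 2\tau^2$ is designed so that the $\eps$-term carries the inner branch and the $\tau^2$-term carries the outer branch. Once this is recognized, the verification in each branch is a short direct computation, and the rest of the proof is a one-line application of $\norm{x}_4 \leq \norm{x}_2$.
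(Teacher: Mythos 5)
Your proof is correct and follows essentially the same route as the paper: a case split at $|\tau|=\eps$ giving a branch-wise pointwise bound on $|\eta_\eps(\tau)|^2$, followed by summation and the embedding $\norm{x}_4 \leq \norm{x}_2$, arriving at the identical constant. The only difference is cosmetic — you verify the target inequality $|\eta_\eps(\tau)|^2 \leq \tau^2\kl{\tfrac{16}{9}\eps^2 + 2\tau^2}$ by direct expansion, whereas the paper first bounds $|\eta_\eps(\tau)|$ linearly in $|\tau|$ and then applies $(a+b)^2 \leq 2a^2+2b^2$.
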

\begin{proof}
Let $\eps > 0$ be arbitrary but fixed and take $x = (x_k)_{k \in \N} \in \lt$. We have that
	\begin{equation*}
	\begin{split}
		\abs{\eta_\eps(x_k)} &= 
		\begin{cases} 
		\abs{x_k}^2 + \tfrac{1}{3} \eps^2 \,, & \abs{x_k} > \eps  \,, \\
		\frac{1}{3 \eps}\abs{x_k}^3 +\eps\abs{x_k}  \,,  &\abs{x_k} \leq \eps \,, \\ 
		\end{cases}  
		\\ \vspace{2pt} \\
		& \leq
		\begin{cases} 
		\abs{x_k}^2 + \tfrac{1}{3} \eps \abs{x_k} \,, & \abs{x_k} > \eps  \,, \\
		\frac{4}{3}\eps\abs{x_k} \,,  &\abs{x_k} \leq \eps \,. \\ 
		\end{cases}  
	\end{split}	
	\end{equation*}
Therefore, we get that
	\begin{equation*}
	\begin{split}
		\norm{N_\eps(x)}_2 ^2
		& =
		\sum\limits_{k \in \N} \abs{\eta_\eps(x_k)}^2
		= 
		\sum\limits_{\abs{x_k} \leq \eps} \abs{\eta_\eps(x_k)}^2
		+
		\sum\limits_{\abs{x_k} > \eps} \abs{\eta_\eps(x_k)}^2
		\\
		& \leq 
		\kl{\tfrac{4}{3} \eps}^2 \sum\limits_{\abs{x_k} \leq \eps} \abs{x_k}^2
		+
		\sum\limits_{\abs{x_k} > \eps} \kl{\abs{x_k}^2 + \tfrac{1}{3} \eps \abs{x_k}}^2
		\\
		& \leq 
		\kl{\tfrac{4}{3} \eps}^2 \sum\limits_{\abs{x_k} \leq \eps} \abs{x_k}^2
		+
		2 \sum\limits_{\abs{x_k} > \eps} \abs{x_k}^4 + \tfrac{2}{9} \eps^2 \sum\limits_{\abs{x_k} > \eps} \abs{x_k}^2 \,,
	\end{split}
	\end{equation*}
from which we derive that
	\begin{equation*}
	\begin{split}
		\norm{N_\eps(x)}_2 ^2
		& 
		\leq 
		\tfrac{16}{9} \eps^2  \sum\limits_{k = 1}^\infty \abs{x_k}^2
		+
		2 \sum\limits_{k=1}	^\infty \abs{x_k}^4\
		\\
		&=
		\tfrac{16}{9} \eps^2  \norm{x}_2^2 
		+
		2 \norm{x}_4^4
		\leq
		\kl{\tfrac{16}{9}\eps^2+ 2 \norm{x}_2^2} \norm{x}_2^2 \,,
	\end{split}
	\end{equation*}
which immediately yields the assertion.
\end{proof}

The operators $N_\eps$ are also continuous, as we see in the following 
\begin{proposition}\label{prop_N_eps_cont}
The operators $N_\eps \, : \, \lt \to \lt$ defined by \eqref{def_N_eps} are continuous.
\end{proposition}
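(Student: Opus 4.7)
The plan is to establish continuity of $N_\eps$ by showing it is in fact locally Lipschitz, via a uniform bound on the derivative $\eta'_\eps$ together with a componentwise mean value argument.

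First, I would revisit the explicit piecewise formula for $\eta'_\eps$ derived in the proof of Lemma~\ref{lem_eta_eps_diff} and observe a simple uniform bound of the form
	\begin{equation*}
		\abs{\eta'_\eps(\tau)} \leq 2\abs{\tau} + 2\eps \qquad \forall\, \tau \in \R \,.
	\end{equation*}
Indeed, on $(-\infty,-\eps) \cup (\eps,\infty)$ we have $\abs{\eta'_\eps(\tau)} = 2\abs{\tau}$, and on $[-\eps,\eps]$ the estimate $\abs{\tau^2/\eps + \eps} \leq 2\eps$ holds. Since $\eta'_\eps$ is continuous on all of $\R$ by Lemma~\ref{lem_eta_eps_diff}, the classical mean value theorem applies on any interval and yields, for each component,
	\begin{equation*}
		\abs{\eta_\eps(x_k) - \eta_\eps(y_k)}
		\leq \kl{2 \max\{\abs{x_k},\abs{y_k}\} + 2\eps} \abs{x_k - y_k} \,.
	\end{equation*}

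Next, I would use the trivial embedding estimate $\abs{x_k} \leq \norm{x}_2$ (and similarly for $y$) to replace the $k$-dependent maximum by a global constant. Squaring and summing over $k$ then gives
	\begin{equation*}
		\norm{N_\eps(x) - N_\eps(y)}_2^2
		\leq \kl{2 \max\{\norm{x}_2,\norm{y}_2\} + 2\eps}^2 \norm{x - y}_2^2 \,,
	\end{equation*}
which, together with Lemma~\ref{lem_Neps_bounded} ensuring that $N_\eps(x),N_\eps(y) \in \lt$, proves that $N_\eps$ is locally Lipschitz continuous on $\lt$. In particular, $N_\eps$ is continuous.

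I do not anticipate a real obstacle here; the only mild subtlety is that the componentwise MVT must be applied across the three pieces of the definition of $\eta_\eps$, which is painless because $\eta'_\eps$ is continuous everywhere. The same argument will later yield differentiability and the appropriate derivative formula for $N_\eps$, so stating it as a Lipschitz estimate (rather than a bare continuity statement) is a worthwhile minor strengthening.
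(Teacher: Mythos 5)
Your proof is correct and follows essentially the same route as the paper: both arguments reduce continuity of $N_\eps$ to a componentwise Lipschitz estimate for $\eta_\eps$ on bounded sets and then sum over $k$. The only difference is that you make the Lipschitz constant explicit via the bound $\abs{\eta'_\eps(\tau)} \leq 2\abs{\tau} + 2\eps$ and the mean value theorem, whereas the paper simply invokes the fact that a continuously differentiable function is Lipschitz on bounded sets; your quantitative version is a harmless (and slightly informative) strengthening.
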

\begin{proof}
Let $\eps > 0$ and $x = (x_k)_{k\in\N} \in \lt$ be arbitrary but fixed, and consider a sequence $x^n = (x^n_k)_{k \in \N} \in \lt$ converging to $x$. It follows that the norm of $x^n$ is uniformly bounded, i.e., there exists a constant $c > 0$ such that $\norm{x^n} \leq c$ for all $n$, from which it also follows that $\abs{x^n_k} \leq c$ for all $k$ and $n$. Furthermore, since the function $\eta_\eps$ is continuously differentiable, it follows that it is Lipschitz continuous on bounded sets. This implies that there exists a Lipschitz constant $L> 0$ such that
	\begin{equation}
		\abs{\eta_\eps(x^n_k) - \eta_\eps(x_k)} \leq L \abs{x^n_k - x_k } \,. 	
	\end{equation}	 
Hence, we get that
	\begin{equation}
		\norm{ N_\eps(x^n) - N_\eps(x) }_2^2
		=
		\sum\limits_{k=1}^\infty \abs{\eta_\eps(x^n_k) - \eta_\eps(x_k)}^2 
		 \leq 
		L^2 \sum\limits_{k=1}^\infty \abs{x^n_k - x_k }^2
		= 
		L^2 \norm{ x^n - x}_2^2   \,,
	\end{equation}
and therefore,
	\begin{equation}
		\norm{ N_\eps(x^n) - N_\eps(x) }_2
		\leq 
		L \norm{ x^n - x}_2   
		\quad 
		\to 0
		\qquad 
		\text{as} \quad n \to \infty \,,
	\end{equation}
which shows the continuity of $N_\eps$ and concludes the proof.
\end{proof}

By their construction, the operators $N_\eps$ are also twice differentiable, as we see in
\begin{proposition}\label{prop_N_eps_diff}
The operators $N_\eps \, : \, \lt \to \lt $ defined by \eqref{def_N_eps} are twice continuously Fr\'echet differentiable, with 
	\begin{equation}
		N'_\eps (x)h = \kl{ \eta_\eps ' (x_k ) h_k  }_{k \in \N} \,,
		\qquad 
		\text{and}
		\qquad
		N''_\eps (x)(h,w) =\kl{ \eta_\eps '' (x_k ) h_k w_k }_{k \in \N}  \,. 
	\end{equation}
\end{proposition}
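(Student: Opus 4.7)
The strategy is to reduce the entire statement to coordinate-wise Taylor expansions of the scalar function $\eta_\eps$, using only two properties that are immediate from the piecewise formulas in Lemma~\ref{lem_eta_eps_diff}: the global bound $\abs{\eta''_\eps(\tau)} \leq 2$, and the fact that $\eta''_\eps$ is $(2/\eps)$-Lipschitz continuous, since its one-sided derivative takes only the values $0$ and $2/\eps$. Everything else is essentially bookkeeping: each derivative of $N_\eps$ is a diagonal (multi-)linear operator, so its operator norm is simply the supremum of the absolute values of its diagonal coefficients.

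Before discussing differentiability, I would verify that the candidate first and second derivatives are bounded (multi-)linear operators on $\lt$. Since $\abs{\eta'_\eps(\tau)} \leq 2\max(\abs{\tau},\eps)$ and $\sup_k \abs{x_k} \leq \norm{x}_2$ for $x \in \lt$, the diagonal multiplication $h \mapsto \kl{\eta'_\eps(x_k)h_k}_{k \in \N}$ is bounded on $\lt$ with norm at most $2\max(\norm{x}_2,\eps)$. For the bilinear candidate $N''_\eps(x)(h,w) = \kl{\eta''_\eps(x_k)h_k w_k}_{k \in \N}$, the estimate $\sum_k h_k^2 w_k^2 \leq \norm{h}_\infty^2 \norm{w}_2^2 \leq \norm{h}_2^2\norm{w}_2^2$ together with $\abs{\eta''_\eps} \leq 2$ gives $\norm{N''_\eps(x)(h,w)}_2 \leq 2\norm{h}_2\norm{w}_2$, uniformly in $x$.

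Fréchet differentiability of $N_\eps$ itself then follows from the coordinate-wise estimate $\abs{\eta_\eps(x_k+h_k)-\eta_\eps(x_k)-\eta'_\eps(x_k)h_k} \leq \tfrac{1}{2}\norm{\eta''_\eps}_\infty h_k^2 \leq h_k^2$, which squared and summed yields $\norm{N_\eps(x+h)-N_\eps(x)-N'_\eps(x)h}_2^2 \leq \sum_k h_k^4 \leq \norm{h}_2^4$, so the remainder is $o(\norm{h}_2)$. Continuity of $x \mapsto N'_\eps(x)$ in operator norm is then immediate: since $\eta'_\eps$ is $2$-Lipschitz, $\norm{N'_\eps(x)-N'_\eps(y)}_{\mathcal{L}(\lt,\lt)} = \sup_k \abs{\eta'_\eps(x_k)-\eta'_\eps(y_k)} \leq 2\norm{x-y}_\infty \leq 2\norm{x-y}_2$.

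The only non-routine step, and where I expect the main subtlety, is the existence of the second derivative. The candidate remainder $N'_\eps(x+h) - N'_\eps(x) - N''_\eps(x)(h,\cdot)$ is again diagonal, with coefficients $\eta'_\eps(x_k+h_k)-\eta'_\eps(x_k)-\eta''_\eps(x_k)h_k$, so its operator norm equals the supremum over $k$ of the absolute values of these coefficients. Applying the $(2/\eps)$-Lipschitz bound on $\eta''_\eps$ in the integral form of the Taylor remainder gives the pointwise bound $h_k^2/\eps$, hence an operator-norm bound of $\norm{h}_\infty^2/\eps \leq \norm{h}_2^2/\eps = o(\norm{h}_2)$. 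The key point is that one must pass through the $\ell^\infty$-norm of $h$ here; the fact that the Lipschitz constant of $\eta''_\eps$ blows up as $\eps \to 0$ is unavoidable and reflects precisely why the limiting operator $N$ fails to be twice differentiable. Continuity of $x \mapsto N''_\eps(x)$ in the bilinear operator norm then follows from the same Lipschitz estimate applied directly to $\eta''_\eps$, yielding $\norm{N''_\eps(x)-N''_\eps(y)} \leq (2/\eps)\norm{x-y}_2$, which completes the proof.
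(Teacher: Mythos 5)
Your proof is correct and follows the same route the paper takes: the paper's proof of this proposition is a one-line appeal to the definition of $N_\eps$ and Lemma~\ref{lem_eta_eps_diff}, i.e.\ a componentwise reduction to the scalar function $\eta_\eps$, which is exactly what you carry out. You simply supply the quantitative details the paper leaves implicit --- the uniform bound $\abs{\eta''_\eps}\leq 2$, the $(2/\eps)$-Lipschitz continuity of $\eta''_\eps$, and the identification of the operator norms of the diagonal (bi)linear maps with the supremum of their coefficients --- all of which check out.
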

\begin{proof}
This follows from the definition of $N_\eps$ together with Lemma~\ref{lem_eta_eps_diff}. 
\end{proof}

The approximation properties of the operators $N_\eps$ are studied in the following 
\begin{proposition}\label{prop_N_approx}
For $N$ and $N_\eps$ be defined by \eqref{def_N} and \eqref{def_N_eps}, respectively, it holds that
	\begin{equation}
		\norm{N(x) - N_\eps(x)}_2 \leq \tfrac{7}{3} \eps \norm{x}_2 \,.
	\end{equation}
\end{proposition}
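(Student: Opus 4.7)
The plan is to reduce the $\ell^2$ bound to a pointwise inequality of the form $|\eta(\tau)-\eta_\eps(\tau)|\le\tfrac{7}{3}\eps\abs{\tau}$ for all $\tau\in\R$, and then sum over the coordinates. Once such a pointwise bound is established, the definitions \eqref{def_N} and \eqref{def_N_eps} immediately give
\[
\norm{N(x)-N_\eps(x)}_2^2=\sum_{k\in\N}\abs{\eta(x_k)-\eta_\eps(x_k)}^2\le\Bigl(\tfrac{7}{3}\eps\Bigr)^2\sum_{k\in\N}\abs{x_k}^2=\Bigl(\tfrac{7}{3}\eps\Bigr)^2\norm{x}_2^2,
\]
and taking square roots yields the claim. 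So everything reduces to a one-dimensional estimate.

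To obtain that pointwise estimate, I would split according to whether $\abs{\tau}\le\eps$ or $\abs{\tau}>\eps$, using that both $\eta$ and $\eta_\eps$ are odd (so one may even assume $\tau\ge0$). In the \emph{outer regime} $\tau>\eps$, one has $\eta(\tau)=\tau^2$ and $\eta_\eps(\tau)=\tau^2+\tfrac{1}{3}\eps^2$, so the difference is simply $\tfrac{1}{3}\eps^2$, which is bounded by $\tfrac{1}{3}\eps\abs{\tau}$ since $\abs{\tau}>\eps$; this is well within $\tfrac{7}{3}\eps\abs{\tau}$. In the \emph{inner regime} $0\le\tau\le\eps$, one has $\eta(\tau)=\tau^2$ and $\eta_\eps(\tau)=\tfrac{1}{3\eps}\tau^3+\eps\tau$, and the triangle inequality gives
\[
\abs{\eta(\tau)-\eta_\eps(\tau)}\le\tau^2+\tfrac{1}{3\eps}\tau^3+\eps\tau\le\eps\abs{\tau}+\tfrac{1}{3}\eps\abs{\tau}+\eps\abs{\tau}=\tfrac{7}{3}\eps\abs{\tau},
\]
where in the middle step I use $\tau^2\le\eps\abs{\tau}$ and $\tfrac{1}{3\eps}\tau^3=\tfrac{\tau^2}{3\eps}\abs{\tau}\le\tfrac{1}{3}\eps\abs{\tau}$ on $[0,\eps]$. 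Combining the two regimes (and using oddness on the negative axis) gives the uniform pointwise bound.

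I do not expect a genuine obstacle here, as the argument is essentially bookkeeping: the transition points $\pm\eps$ are chosen exactly so that $\eta_\eps$ matches $\eta$ up to the additive constant $\pm\tfrac{1}{3}\eps^2$, and the prefactor $\tfrac{7}{3}$ is precisely what falls out of the three contributions $1+\tfrac{1}{3}+1$ in the inner regime. The only minor care to take is that the three individual terms on $[-\eps,\eps]$ each must be bounded by $\eps\abs{\tau}$ (or a small multiple thereof) rather than by $\eps^2$, so that the final estimate scales correctly in $\abs{\tau}$ and the sum over $k$ reproduces $\norm{x}_2$ rather than a quantity that would blow up on $\lt$.
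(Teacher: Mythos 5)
Your proposal is correct and follows essentially the same route as the paper: both reduce the claim to the pointwise bound $\abs{\eta(\tau)-\eta_\eps(\tau)}\le\tfrac{7}{3}\eps\abs{\tau}$, obtained by the same case split at $\abs{\tau}=\eps$ with the same three contributions $\eps\abs{\tau}+\tfrac{1}{3}\eps\abs{\tau}+\eps\abs{\tau}$ in the inner regime and $\tfrac{1}{3}\eps^2\le\tfrac{1}{3}\eps\abs{\tau}$ in the outer regime, and then sum over the coordinates. No gaps.
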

\begin{proof}
Let $\eps > 0$ and  $x \in \lt$ be arbitrary but fixed. Then it holds that
	\begin{equation*}
		\eta_\eps(x_k) - \eta(x_k) 
		=
		\begin{cases}
		- \tfrac{1}{3} \eps^2	 \,, & x_k \in (-\infty,-\eps) \,,
		\\
		\tfrac{1}{3\eps}x_k^3 + \eps x_k  + x_k^2 \,, & x_k \in [-\eps,0] \,,
		\\
		\tfrac{1}{3\eps}x_k^3 + \eps x_k - x_k^2 \,, & x_k \in [0,\eps] \,,
		\\
		\tfrac{1}{3}\eps^2 \,, & x_k \in (\eps,\infty) \,,
		\end{cases}
	\end{equation*}
from which it follows that
	\begin{equation*}
	\begin{split}
		\abs{\eta_\eps(x_k) - \eta(x_k) }
		 &=
		\begin{cases}
		\tfrac{1}{3} \eps^2	 \,, & \abs{x_k} > \eps \,,
		\\
		\abs{  \tfrac{1}{3\eps} \abs{x_k}^3 + \eps \abs{x_k} - \abs{x_k}^2    } 		
		\,, &  \abs{x_k} \leq \eps\,.
		\end{cases}
		\\ \vspace{2pt} \\
		&\leq
		\begin{cases}
		\tfrac{1}{3} \eps \abs{x_k}	 \,, & \abs{x_k} > \eps \,,
		\\
		 \tfrac{1}{3} \eps \abs{x_k} + \eps \abs{x_k} + \eps \abs{x_k}   		
		\,, &  \abs{x_k} \leq \eps\,,
		\end{cases}.
	\end{split}
	\end{equation*}
and therefore
	\begin{equation*}
		\abs{\eta_\eps(x_k) - \eta(x_k) }
		\leq 
		\tfrac{7}{3} \eps \abs{x_k} \,.
	\end{equation*}
This now implies that
	\begin{equation*}
	\begin{split}
		\norm{N_\eps(x) - N(x) }_2^2
		= 
		\sum\limits_{k=1}^\infty \abs{\eta_\eps(x_k) - \eta(x_k)}^2
		= \kl{\tfrac{7}{3}\eps}^2
		\sum\limits_{k=1}^\infty \abs{x_k}^2 
		= \kl{\tfrac{7}{3}\eps}^2 \norm{x}_2^2 \,,
	\end{split}
	\end{equation*}
from which the statement immediately follows.
\end{proof}

The above result immediately implies an approximation result for the operators $F_\eps$. 
\begin{corollary}\label{cor_F_approx}
Let $A \, : \, \lt \to \lt$ be a bounded and linear operator and let $F$ and $F_\eps$ be defined by \eqref{def_F} and \eqref{def_F_eps}, respectively. Then it holds that
	\begin{equation}
		\norm{F(x) - F_\eps(x)}_2 \leq \tfrac{7}{3} \eps \norm{A} \norm{x}_2 \,.
	\end{equation}
\end{corollary}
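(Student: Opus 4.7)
The plan is essentially a one-line reduction to Proposition~\ref{prop_N_approx}, exploiting the linearity of $A$. First I would write $F(x) - F_\eps(x) = A(N(x)) - A(N_\eps(x))$ using the definitions \eqref{def_F} and \eqref{def_F_eps}, and then pull $A$ out as a single bounded linear map to get $F(x) - F_\eps(x) = A\bigl(N(x) - N_\eps(x)\bigr)$.

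Next I would apply the operator norm bound, which gives
\begin{equation*}
\norm{F(x) - F_\eps(x)}_2 \leq \norm{A} \, \norm{N(x) - N_\eps(x)}_2.
\end{equation*}
Finally I would invoke Proposition~\ref{prop_N_approx} to bound $\norm{N(x) - N_\eps(x)}_2 \leq \tfrac{7}{3}\eps \norm{x}_2$, yielding the claimed estimate.

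There is no real obstacle here; the only thing to be careful about is that the corollary hypothesizes $A$ to be linear and bounded (both already assumed globally for problem~\eqref{Ax=y}), which is exactly what is needed to factor $A$ out of the difference and to control the resulting image by $\norm{A}$. No further regularity of $N$ or $N_\eps$ is used, so the proof will be two short displayed lines.
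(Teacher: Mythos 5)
Your proposal is correct and is essentially identical to the paper's own proof: factor out the linear operator $A$ from the difference, bound by the operator norm, and apply Proposition~\ref{prop_N_approx}. Nothing is missing.
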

\begin{proof}
By the definition of $F$ and $F_\eps$, we have that
	\begin{equation*}
	\begin{split}
		\norm{F(x) - F_\eps(x)}_2 
		=
		\norm{(A \circ N)(x) - (A \circ N_\eps)(x) }_2
		\leq 
		\norm{A} \norm{N(x) - N_\eps(x)}_2 \,,
 	\end{split}
	\end{equation*}
which, together with Proposition~\ref{prop_N_approx} now yields the assertion.
\end{proof}

Other important properties of the operators $F$ and $F_\eps$ are collected in the following 
\begin{proposition}\label{prop_Fe_comp_closed}
Let $A : \lt \to \lt $ be a bounded linear operator. Then the operators $F$ and $F_\eps$ defined by \eqref{def_F} and \eqref{def_F_eps}, respectively, are continuous and weakly sequentially closed.
\end{proposition}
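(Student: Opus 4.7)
The plan is to derive both properties for $F_\eps$ directly from the corresponding statements for $N_\eps$ together with the fact that $A$ is bounded and linear, and then to observe that the argument for $F$ is completely analogous.

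For the continuity assertion there is essentially nothing to do: $F_\eps = A \circ N_\eps$ is the composition of the continuous operator $N_\eps$ (Proposition~\ref{prop_N_eps_cont}) with the bounded, hence continuous, linear operator $A$. For $F = A \circ N$ I would proceed in the same way, after first establishing continuity of $N$ by repeating the proof of Proposition~\ref{prop_N_eps_cont} verbatim, using that $\eta(\tau) = \tau \abs{\tau}$ is continuously differentiable and therefore Lipschitz on bounded sets.

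For the weak sequential closedness I would actually prove the stronger statement that $F_\eps$ is weakly sequentially continuous; closedness then follows because strong convergence implies weak convergence and weak limits are unique. So let $x^n \wto x$ in $\lt$. By the uniform boundedness principle $\norm{x^n}_2 \leq C$ for some $C$, and testing against the canonical basis vectors $e_k$ gives componentwise convergence $x^n_k \to x_k$ for every $k$. Continuity of $\eta_\eps$ (Lemma~\ref{lem_eta_eps_diff}) then yields $\eta_\eps(x^n_k) \to \eta_\eps(x_k)$, while Lemma~\ref{lem_Neps_bounded} bounds $\norm{N_\eps(x^n)}_2$ uniformly in $n$. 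The standard characterisation of weak convergence in $\lt$ — boundedness together with componentwise convergence — then gives $N_\eps(x^n) \wto N_\eps(x)$, and applying the weakly continuous operator $A$ produces $F_\eps(x^n) \wto F_\eps(x)$. If in addition $F_\eps(x^n) \to z$ strongly, uniqueness of weak limits forces $z = F_\eps(x)$, which is the weak sequential closedness.

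The case of $F$ proceeds identically, once one observes that $\norm{N(x^n)}_2^2 = \sum_k \abs{x^n_k}^4 \leq \norm{x^n}_2^4$, so that $(N(x^n))$ is also uniformly bounded in $\lt$, and that $\eta$ is continuous. I expect the only mildly delicate step to be the componentwise-plus-bounded-implies-weak reduction; it is a textbook fact in $\lt$, but I would record it explicitly (or justify it via weak compactness combined with a subsequence-of-subsequence argument) so that the proof is fully self-contained.
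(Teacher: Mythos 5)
Your proposal is correct and follows essentially the same route as the paper: continuity of $F_\eps$ (and $F$) as a composition of continuous maps, and weak sequential closedness via weak sequential continuity, obtained from the characterisation of weak convergence in $\lt$ as componentwise convergence plus norm boundedness (using Lemma~\ref{lem_Neps_bounded} and Proposition~\ref{prop_N_eps_cont}, which also hold for $\eps=0$) followed by the weak continuity of the bounded linear operator $A$. The only cosmetic difference is that the paper cites \cite{Ramlau_Zarzer_2012} for the case of $F$ before noting the same argument applies, and that in the closedness step one should allow $F_\eps(x^n)$ to converge \emph{weakly} to $z$ rather than strongly — but your weak-continuity argument covers that case anyway.
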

\begin{proof}
Since $A$ and, due to Proposition~\ref{prop_N_eps_cont}, $N_\eps$ are continuous, by its definition also $F_\eps$ is continuous. In order to show the weak sequential closedness of $F_\eps$, note that since its definition space is the whole of $\lt$, it suffices to show that $F_\eps$ is weakly continuous. For this, take an arbitrary sequence $x^n \in \lt$ converging weakly to some element $x \in \lt$.  Since in $\lt$ a sequence converges weakly if and only if it converges componentwise and its norm is bounded \cite{Conway_1994}, it follows from the continuity and boundedness of $N_\eps$ (Lemma~\ref{lem_Neps_bounded}) and Proposition~\ref{prop_N_eps_cont}) that $N_\eps(x^n)$ converges weakly to $N_\eps(x)$. Now, as a bounded linear operator, $A$ is also weakly sequentially continuous. Hence, since $F_\eps = A \circ N_\eps$, it follows that $F_\eps(x^n)$ converges weakly to $F_\eps(x)$, which establishes its weak sequential continuity and consequentially also its weak sequential closedness.  For the operator $F$, these result have already been shown in \cite{Ramlau_Zarzer_2012}. However, noting that Lemma~\ref{lem_Neps_bounded} and Proposition~\ref{prop_N_eps_cont} also hold for the limit case $\eps = 0$, they also follow the same way as above.
\end{proof}

Furthermore, the differentiability of $N_\eps$ immediately translates into the following
\begin{proposition} \label{derivatives}
The operators $F_\eps$ and thus the functionals $\Jade$ defined in \eqref{def_F_eps} and \eqref{def_Jade}, respectively, are twice continuously Fr{\'e}chet differentiable, where
	\begin{equation*} 
	\begin{split}
		&F_\eps'(x)h = A N_\eps'(x)h \,,
		\qquad 
		F_\eps''(x)(h,w) = A N_\eps''(x)(h,w) \,,
		\\
		&\Jade\, '(x)h = 2\spr{F_\eps '(x)^*(F_\eps(x)-\yd) + \alpha x ,h} \,,
		\\
		&\Jade\,''(x)(h,w) =  2 \spr{F_\eps(x) - \yd, F_\eps''(x)(h,w) } +  2\spr{F_\eps '(x)^*F_\eps'(x)w   + \alpha w ,h}   \,.
	\end{split}	
	\end{equation*}
\end{proposition}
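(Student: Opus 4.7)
The plan is to reduce both claims to routine applications of the chain rule and product rule, leaning entirely on Proposition~\ref{prop_N_eps_diff}, which already supplies the twice continuous Fr\'echet differentiability of $N_\eps$ together with explicit formulas for $N_\eps'$ and $N_\eps''$.

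First I would handle $F_\eps$. Since $A$ is a bounded linear operator, its Fr\'echet derivative is $A$ itself and its higher derivatives vanish. Applying the chain rule to $F_\eps = A \circ N_\eps$ then immediately yields $F_\eps'(x)h = A N_\eps'(x)h$ and $F_\eps''(x)(h,w) = A N_\eps''(x)(h,w)$. Continuity of the maps $x \mapsto F_\eps'(x)$ and $x \mapsto F_\eps''(x)$ in the relevant operator norms follows from the continuity of $N_\eps'$ and $N_\eps''$ (Proposition~\ref{prop_N_eps_diff}) combined with boundedness of $A$.

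Second, for $\Jade$, I would split $\Jade(x) = \norm{F_\eps(x)-\yd}_2^2 + \alpha\norm{x}_2^2$ and differentiate each summand separately. The penalty $\alpha\norm{x}_2^2$ contributes $2\alpha\spr{x,h}$ to the first derivative and $2\alpha\spr{w,h}$ to the second. For the residual term, the chain rule with outer map $u \mapsto \norm{u-\yd}_2^2$ (whose Fr\'echet derivative at $u$ sends $v$ to $2\spr{u-\yd,v}$) produces $2\spr{F_\eps(x)-\yd,F_\eps'(x)h} = 2\spr{F_\eps'(x)^*(F_\eps(x)-\yd),h}$, which together with the penalty contribution gives the claimed expression for $\Jade'(x)h$.

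Third, I would differentiate the gradient map $x \mapsto F_\eps'(x)^*(F_\eps(x)-\yd)$ in direction $w$ via the product rule. The factor $F_\eps(x)-\yd$ contributes $F_\eps'(x)^*F_\eps'(x)w$ through the chain rule, while differentiating the adjoint factor produces a term that, tested against $h$, becomes $\spr{F_\eps(x)-\yd, F_\eps''(x)(h,w)}$ by symmetry of the second derivative. Adding the penalty contribution $2\alpha\spr{w,h}$ and rearranging yields the stated formula for $\Jade''(x)(h,w)$. The only mild technical point is verifying continuity of $x \mapsto \Jade''(x)$ as a bilinear form, which follows from the established continuity of $F_\eps$, $F_\eps'$, $F_\eps''$ together with continuity of composition, adjunction, and the inner product; I do not foresee any substantive obstacle, as the argument is essentially a bookkeeping exercise.
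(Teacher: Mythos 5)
Your proposal is correct and follows essentially the same route as the paper, which simply invokes the definitions of $F_\eps$ and $\Jade$ together with Proposition~\ref{prop_N_eps_diff}; you merely spell out the chain-rule and product-rule bookkeeping that the paper leaves implicit. All the computed formulas match the statement.
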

\begin{proof}
This follows from the definition of $F_\eps$ and $\Jade$ together with Proposition~\ref{prop_N_eps_diff}.
\end{proof}
We now consider the problem of minimizing the Tikhonov functional $\Jade$, whose minimizers we denote by $\xade$. Due to the above results, the classical analysis of Tikhonov regularization for nonlinear operators is applicable (see for example \cite{Engl_Hanke_Neubauer_1996, Engl_Ramlau_2015}), and we immediately get the following
\begin{theorem}
Let $A\,:\, \lt \to \lt$ be a bounded, linear operator and let $F_\eps$ be defined by \eqref{def_F_eps}.Then for each $\alpha > 0$, a minimizer $\xade$ of the functional $\Jade$ defined in \eqref{def_Jade} exists. Furthermore, the minimization of $\Jade$ is stable under perturbations of $\yd$. 
\end{theorem}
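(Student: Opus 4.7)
The plan is to prove both existence and stability by the classical direct method of the calculus of variations, following the standard Tikhonov-regularization template for nonlinear operators (as developed e.g.\ in \cite{Engl_Hanke_Neubauer_1996}). All the technical ingredients are already at hand: weak sequential closedness of $F_\eps$ from Proposition~\ref{prop_Fe_comp_closed}, the quadratic $\lt$-penalty in $\Jade$ for coercivity and weak lower semicontinuity, and the fact that $\lt$ is reflexive so that bounded sets are relatively weakly sequentially compact.

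For existence, I would let $(x_n) \subset \lt$ be a minimizing sequence for $\Jade$, i.e.\ $\Jade(x_n) \to \inf_{x\in\lt} \Jade(x) =: m \geq 0$. From $\alpha \norm{x_n}_2^2 \leq \Jade(x_n) \leq m + 1$ for $n$ large enough, I conclude that $(x_n)$ is bounded in $\lt$. By reflexivity, a subsequence (again denoted $(x_n)$) converges weakly to some $\bar x \in \lt$. By the weak sequential continuity of $F_\eps$ established in Proposition~\ref{prop_Fe_comp_closed}, $F_\eps(x_n) \wto F_\eps(\bar x)$, hence by weak lower semicontinuity of $\norm{\cdot}_2$,
\begin{equation*}
\norm{F_\eps(\bar x) - \yd}_2^2 \leq \liminf_{n\to\infty}\norm{F_\eps(x_n) - \yd}_2^2,\qquad \norm{\bar x}_2^2 \leq \liminf_{n\to\infty}\norm{x_n}_2^2.
\end{equation*}
Adding these (with the weight $\alpha$) yields $\Jade(\bar x) \leq \liminf_{n\to\infty} \Jade(x_n) = m$, so $\bar x$ is a minimizer.

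For stability, fix a sequence $y^n \to \yd$ in $\lt$ and let $x_n$ denote a corresponding minimizer of $\mathcal{J}_{\alpha, y^n}^\eps(x) := \norm{F_\eps(x) - y^n}_2^2 + \alpha\norm{x}_2^2$ (which exists by the existence part just proved). Using $x_n$ as a minimizer and testing against an arbitrary but fixed reference point $z \in \lt$ gives $\alpha\norm{x_n}_2^2 \leq \mathcal{J}_{\alpha, y^n}^\eps(x_n) \leq \mathcal{J}_{\alpha, y^n}^\eps(z)$, and since $y^n$ is bounded, the right-hand side is uniformly bounded; hence $(x_n)$ is bounded in $\lt$. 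Extract a weakly convergent subsequence $x_{n_k} \wto \tilde x$; by weak sequential continuity of $F_\eps$, $F_\eps(x_{n_k}) \wto F_\eps(\tilde x)$, and together with $y^{n_k} \to \yd$ (strongly) and weak lower semicontinuity of the norms we get
\begin{equation*}
\Jade(\tilde x) \leq \liminf_{k\to\infty} \mathcal{J}_{\alpha, y^{n_k}}^\eps(x_{n_k}) \leq \liminf_{k\to\infty} \mathcal{J}_{\alpha, y^{n_k}}^\eps(x) = \Jade(x) \qquad \forall\, x \in \lt,
\end{equation*}
where the last equality uses $y^{n_k} \to \yd$ and the continuity of $\norm{F_\eps(x) - \cdot}_2^2$ in the data argument for the fixed $x$. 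Hence $\tilde x$ minimizes $\Jade$, which is the weak subsequential stability asserted.

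No step is genuinely hard here; the only place requiring minor care is ensuring that in the stability argument the comparison inequality above really passes to the limit on both sides, which it does because the data perturbation enters only through the (continuous) quadratic residual term. The whole argument reduces to verifying that the hypotheses of the standard abstract existence/stability theorem for Tikhonov functionals with nonlinear weakly sequentially closed forward operators are met — and these were arranged by Propositions~\ref{prop_N_eps_cont}, \ref{prop_N_eps_diff} and \ref{prop_Fe_comp_closed}.
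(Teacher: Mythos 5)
Your proof is correct and takes essentially the same route as the paper: the paper merely verifies that $F_\eps$ is continuous and weakly sequentially closed (Proposition~\ref{prop_Fe_comp_closed}) and then invokes the standard existence/stability result for Tikhonov regularization with nonlinear operators, namely \cite[Theorem~10.2]{Engl_Hanke_Neubauer_1996}, whereas you unpack that citation and write out the underlying direct-method and weak-compactness argument in full. There is no difference in substance, only in the level of detail.
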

\begin{proof}
Since by Proposition~\ref{prop_Fe_comp_closed}, the operator $F_\eps$ is continuous and weakly sequentially closed, this follows immediately from \cite[Theorem~10.2]{Engl_Hanke_Neubauer_1996}.
\end{proof}

Next, we are interested in the behaviour of the minimizers $\xade$ as $\eps \to 0$. Given a suitable coupling of the noise level $\delta$ and the parameter $\eps$, we get the following
\begin{theorem} \label{conv}
Assume that $F(x) = y$ has a solution and let $\alpha(\delta)$ and $\eps(\delta)$ satisfy
	\begin{equation}\label{cond_alpha_delta_eta}
		\alpha(\delta) \to 0 \,,
		\quad
		\eps(\delta) \to 0 \,,
		\quad
		\frac{\delta^2}{\alpha(\delta)} \to 0 \,,
		\quad
		\frac{\eps^2}{\alpha(\delta)} \to 0 \,,
		\quad
		\text{as}
		\quad
		\delta \to 0 \,.
	\end{equation}
Then $x_{\alpha(\delta),\eps(\delta)}^\delta$ has a convergent subsequence. Moreover, the limit of every convergent subsequence is a minimum-norm solution of $F(x) = y$. Furthermore, if the minimum-norm solution $\xD$ is unique, then
	\begin{equation}
		\lim\limits_{\delta \to 0} x_{\alpha(\delta),\eps(\delta)}^\delta \, = \, x^\dagger \,.
	\end{equation}
\end{theorem}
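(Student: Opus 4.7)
The plan is to follow the classical convergence argument for Tikhonov regularization of nonlinear ill-posed problems (as in \cite[Theorem~10.3]{Engl_Hanke_Neubauer_1996}), with the twist that the operator $F_\eps$ minimized against the data differs from the limit operator $F$ whose equation $F(x) = y$ we actually want to solve. The extra approximation error $\|F_\eps(x) - F(x)\|_2 \le \tfrac{7}{3}\eps \|A\|\,\|x\|_2$ from Corollary~\ref{cor_F_approx} is the only additional term compared to the standard argument, and the condition $\eps^2/\alpha(\delta) \to 0$ is tailored to absorb it.

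Fix an $\lt$-minimum-norm solution $\xD$ of $F(x) = y$, abbreviate $\xade := x_{\alpha(\delta),\eps(\delta)}^\delta$, and use the minimizing property of $\xade$ together with $F(\xD) = y$ and \eqref{y-yd}:
\begin{equation*}
\|F_\eps(\xade) - \yd\|_2^2 + \alpha \|\xade\|_2^2
\le \|F_\eps(\xD) - \yd\|_2^2 + \alpha\|\xD\|_2^2
\le \bigl(\tfrac{7}{3}\eps \|A\|\,\|\xD\|_2 + \delta\bigr)^2 + \alpha\|\xD\|_2^2.
\end{equation*}
Dividing by $\alpha$ and invoking \eqref{cond_alpha_delta_eta} yields $\limsup_{\delta \to 0} \|\xade\|_2 \le \|\xD\|_2$, and returning to the inequality above also gives $\|F_\eps(\xade) - \yd\|_2 \to 0$. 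Combining this with the triangle inequality
\[
\|F(\xade) - y\|_2
\le \|F(\xade) - F_\eps(\xade)\|_2 + \|F_\eps(\xade) - \yd\|_2 + \delta
\le \tfrac{7}{3}\eps\|A\|\,\|\xade\|_2 + \|F_\eps(\xade) - \yd\|_2 + \delta,
\]
and the already established boundedness of $\|\xade\|_2$, one concludes $\|F(\xade) - y\|_2 \to 0$ as $\delta \to 0$.

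Since $\lt$ is reflexive and $\{\xade\}$ is norm-bounded, pass to a weakly convergent subsequence $\xade \wto x^*$. By Proposition~\ref{prop_Fe_comp_closed}, the operator $F$ is weakly sequentially closed (the proof there noted that the statements remain valid in the limit $\eps = 0$), so $F(\xade) \to y$ forces $F(x^*) = y$. Weak lower semicontinuity of the norm combined with the bound $\limsup \|\xade\|_2 \le \|\xD\|_2$ yields $\|x^*\|_2 \le \|\xD\|_2$, so $x^*$ is itself a minimum-norm solution. Moreover, the same chain gives $\|\xade\|_2 \to \|x^*\|_2$, and since $\lt$ is a Hilbert space this upgrades the weak convergence of the subsequence to strong convergence.

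If $\xD$ is the unique minimum-norm solution, the standard subsequence-of-subsequence argument applies: every subsequence of $\{\xade\}$ contains a further subsequence converging strongly to the (unique) minimum-norm solution $\xD$, and hence the full net $\xade$ converges to $\xD$ as $\delta \to 0$. The main obstacle in the argument is ensuring that the perturbation introduced by replacing $F$ with $F_\eps$ vanishes sufficiently fast relative to $\alpha$; this is precisely what the coupling condition $\eps^2/\alpha(\delta) \to 0$ guarantees, both in bounding $\|\xade\|_2$ and in showing $F(\xade) \to y$.
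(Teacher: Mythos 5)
Your proposal is correct and follows essentially the same route as the paper: the perturbed minimization inequality $\|F_\eps(\xade)-\yd\|_2^2+\alpha\|\xade\|_2^2\le\bigl(\tfrac{7}{3}\eps\|A\|\|\xD\|_2+\delta\bigr)^2+\alpha\|\xD\|_2^2$, the parameter coupling \eqref{cond_alpha_delta_eta} to get boundedness and vanishing residual, the triangle inequality with Corollary~\ref{cor_F_approx} to pass from $F_\eps$ to $F$, and weak sequential closedness of $F$. The only difference is that you spell out the standard endgame (weak lower semicontinuity, $\|x^*\|_2=\|\xD\|_2$, upgrade to strong convergence, subsequence-of-subsequence) which the paper delegates to \cite[Theorem~10.3]{Engl_Hanke_Neubauer_1996}.
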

\begin{proof}
The proof of this theorem follows the same lines as the classical proof of convergence of Tikhonov regularization \cite{Engl_Hanke_Neubauer_1996} and the proof for the case that the operator is approximated by a series of finite dimensional operators \cite{Neubauer_1989, Poeschl_Resmerita_Scherzer_2010} (in which case a slightly stronger condition than what we can derive from Proposition~\ref{prop_N_approx} was used). Hence, we here only indicate the main differences in the proof.

Note first that due to Proposition~\ref{prop_N_approx}, it follows that
	\begin{equation}
	\begin{split}
		\norm{F_\eps(x) - F(x)  }_2 
		\leq
		\norm{A}\norm{N_\eps(x) - N(x)  }_2   
		\leq
		\tfrac{7}{3} \eps \norm{A}\norm{x}_2  \,.
	\end{split}
	\end{equation}
This, together with $\xade$ being a minimizer of $\Jade$ implies that
	\begin{equation}\label{eq_helper_3}
	\begin{split}
		\norm{F_\eps(\xade) - \yd }_2^2 + \alpha \norm{\xade}_2^2 
		&\leq
		\norm{F_\eps(\xD) - \yd }_2^2 + \alpha \norm{\xD}_2^2 
		\\
		&\leq
		\kl{\tfrac{7}{3} \norm{A}\norm{\xD}_2  \eps + \delta}^2 + \alpha \norm{\xD}_2^2 \,.
	\end{split}
	\end{equation}
Together with \eqref{cond_alpha_delta_eta}, this implies the boundedness of $\xade$ and 
	\begin{equation*}
		\lim\limits_{\delta \to 0} \norm{F_\eps(\xade) - \yd }_2 = 0 \,.
	\end{equation*}
Hence, since then there holds
		\begin{equation*}
		\begin{split}
		\norm{F(\xade) - y }_2 
		&\leq
		\norm{F_\eps(\xade) - \yd }_2 
		+
		\norm{F_\eps(\xade) - F(\xade) }_2 
		+
		\norm{y - \yd}_2
		\\
		&\leq
		\norm{F_\eps(\xade) - \yd }_2 
		+
		\delta
		+
		\tfrac{7}{3}\norm{A}\norm{\xade}_2	\eps 	
		 \quad
		\underset{\delta \to 0} {\longrightarrow}
		\quad 
		0 \,,
	\end{split}
	\end{equation*}
the weak sequential closedness of $F$ implies the convergence of a subsequence of $\xade$ to a solution of $F(x) = y$. The remainder of the proof then follows analogously to the one of \cite[Theorem~10.3]{Engl_Hanke_Neubauer_1996} and is therefore omitted here.
\end{proof}

The above result shows that minimizing $\Jade$ instead of $\Jad$ to approximate the solution of $F(x) = y$ makes sense if $\eps$ and the noise level $\delta$ are suitably coupled, for example via $\eps \sim \delta$. Furthermore, the assumption that $F(x) = y$ solvable, is for example satisfied if $Ax = y$ has a solution belonging not only to $\lt$ but also to $\lo$, i.e., is sparse.

\begin{remark}
Following the line of the proofs of classical Tikhonov regularization results, it is also possible to derive convergence rate results under standard assumptions. Furthermore, the above analysis also holds for nonlinear operators $A$ which are Lipschitz continuous, since then Corollary~\ref{cor_F_approx} also holds. 
\end{remark}

\section{Minimization methods for the Tikhonov functional} \label{numerical_experiments} 

In the previous section, we established existence, stability, and convergence of the minimizers of $\Jad$ and $\Jade$ under standard assumptions. However, there still remains the question of how to actually compute those minimizers in an efficient way.

One way to do this is to interpret the minimization of $\Jad$ and $\Jade$ as Tikhonov regularization for the nonlinear operator equations $F(x) = y$ and $F_\eps(x) = y$, respectively, and to use iterative regularization methods for their solution. Since both the operator $F$ and $F_\eps$ are continuously Fr\'echet differentiable, iterative regularization methods like Landweber iteration \cite{Kaltenbacher_Neubauer_Scherzer_2008}, TIGRA \cite{Ramlau_2003}, the Levenberg-Marquardt method \cite{Hanke_1997,Jin_2010} or iteratively regularized Gauss-Newton  \cite{Blaschke_Neubauer_Scherzer_1997,Jin_Tautenhahn_2009} are applicable. Of course, as all of those methods only require a once differentiable operator, it makes sense in terms of accuracy to apply them for the operator $F$ and not for the approximated operator $F_\eps$.

Another way is to use standard iterative optimization methods for the (well-posed) problem of minimizing $\Jad$ or $\Jade$. In particular, since we have derived in the previous section that $\Jade$ is twice continuously Fr\'echet differentiable, efficient second order methods like Newton's method are applicable for its minimization.

In this section, we introduce and discuss some details of the minimization methods used to obtain the numerical results presented in Section~\ref{sect_numerics} below.

\subsection{Gradient descent, ISTA and FISTA}

We have seen that the Tikhonov functional $\Jad$ defined in \eqref{def_Jad} is continuously Fr\'echet differentiable. Hence, it is possible to apply gradient descent for its minimization. 

For this, note first that since $N'(x)h$ is a linear operator, it can be written as
	\begin{equation}\label{eq_N_G}
		N'(x)h = G(x)h \,,
	\end{equation}
where $G(x)$ is the infinite dimensional `matrix' representation of $N'(x)$ given by
	\begin{equation*}
		G(x) := \text{diag}(2\abs{x_k})_{k \in \N} \,,
	\end{equation*}
which is called the $\emph{gradient}$ of $N$. Similarly, there is an (infinite-dimensional) matrix representation of $\Jad'(x)$, i.e., the gradient $\nabla \Jad(x)$ of $\Jad(x)$, which is given by
	\begin{equation*}
		\nabla \Jad (x) := 2 G(x) A^T \kl{AN(x)-y^\delta} + 2 \alpha x \,,
	\end{equation*}
where, with a small abuse of notation, $A$ denotes the (infinite-dimensional) matrix representation of the linear operator $A$, and $A^T$ denotes its transpose.

Using the above representations, we can now write the gradient descent algorithm for minimizing $\Jad$ in the well-known form
	\begin{equation}\label{gradient_descent} 
		x_{n+1}^\delta  = x_n^\delta  - \omega_n \nabla \Jad (x_n^\delta ) \, , 
	\end{equation} 
where $\omega_n$ is a sequence of stepsizes. If the stepsizes are chosen in a suitable way, for example via the Armijo rule \cite{Hinze_Ulbrich_Ulbrich_2009}, the iterates converge to a stationary point of $\Jad$ (see e.g.\ \cite[Theorem 2.2]{Hinze_Ulbrich_Ulbrich_2009}). In order to stop the iteration, we employ the well-known \emph{discrepancy principle}, i.e., the iteration is terminated with index $n_* = n_*(\delta,\yd)$, when for the first time 
	\begin{equation}\label{discrepancy_nonlinear}
		\norm{F(x_{n_*}^\delta )-y^\delta}_2 \leq \tau \delta  \,,
	\end{equation}
where $\tau>1$ is fixed. Note that since the Tikhonov functional may have several (local and global) minima, convergence to a global minimum is only guaranteed if a sufficiently good initial guess is chosen.

The (infinite-dimensional) matrix representations introduced above can also be used to rewrite ISTA \eqref{ISTA} into the following form 
	\begin{equation*}
		x_{n+1}^\delta  = S_{\alpha \omega} \kl{x_n^\delta - \omega \, 2 \, G(x_n^\delta ) A^T \kl{AN(x_n^\delta )-y^\delta}} \, ,
	\end{equation*}
which immediately also translates to a similar rewriting of FISTA defined in \eqref{FISTA}.

\subsection{The Levenberg-Marquardt method}  

It is well-known that gradient based methods like gradient descent or ISTA are quite slow with respect to convergence speed. Although it is possible to speed them up by using suitable stepsizes (see for example \cite{Saxenhuber_2016,Neubauer_2017_2}) or acceleration schemes like FISTA, it is often advantageous to use second-order methods instead. One such method is the Levenberg-Marquardt method \cite{Hanke_1997,Jin_2010}, which is given by 
	\begin{equation}\label{Levenberg_Marquardt} 
		x^\delta_{n+1}=x^\delta_n + \kl{ F'(x^\delta_n)^* F'(x^\delta_n)+\alpha_n  I }^{-1}F'(x^\delta_n)^*\kl{ y^\delta - F(x^\delta_n) }.
	\end{equation}
Although this is a second-order method, it only requires the operator $F$ to be once continuously Fr\'echet differentiable. Using again the (infinite-dimensional) matrix representation of $N'(x)h$ from \eqref{eq_N_G}, the method can be rewritten into the following form
	\begin{equation*}
		x_{n+1}^\delta = x_n^\delta  + \kl{ G(x_n^\delta) A^T A G(x_n^\delta) + \alpha_n I }^{-1} G(x_n^\delta) A^T(y^\delta - F(x_n^\delta))  \, .
	\end{equation*}
In order to obtain convergence of this method, one needs, among other things, a suitably chosen sequence $\alpha_n$ converging to $0$ as well as a sufficiently good initial guess \cite{Hanke_1997}). As a stopping rule, one usually also employs the discrepancy principle \eqref{discrepancy_nonlinear}.

The Levenberg-Marquardt method typically requires only very few iterations to satisfy the discrepancy principle. However, in each iteration step the linear operator $\kl{ F'(x^\delta_n)^* F'(x^\delta_n)+\alpha_n  I }$ has to be inverted, which might be costly for some applications. This can be circumvented, though, via approximating the result of this inversion by the application of number of iterations of the conjugate gradient method.

It is possible to add an additional regularization term to the Levenberg-Marquardt method, thereby ending up with the so-called \emph{iteratively-regularized Gauss-Newton method} \cite{Blaschke_Neubauer_Scherzer_1997,Jin_Tautenhahn_2009}. Typically behaving very similar in practice, this method can be proven to converge under slightly weaker assumptions than the Levenberg-Marquardt method.

\subsection{Newton's method}\label{sect_Newton}

In contrast to $\Jad$, the functional $\Jade$ is twice continuously Fr\'echet differentiable. The information contained in this second derivative can be used to design efficient methods for its minimization. One such method, based on Newton's method, is considered here.

Note that the first-order optimality condition for minimizing $\Jade$ is given by
	\begin{equation}\label{optimality_condition} 
		 \Jade \,'(x)h= 0  \qquad \forall\,  h \in \lt \,.
	\end{equation} 
Using Taylor approximation in the above equation yields
	\begin{equation*} 
		\Jade\,' (x + \tau)(h) = \Jade\,' (x)h + \Jade\,'' (x)(\tau,h) \qquad \forall \, h \in \lt \,,
	\end{equation*} 
which, for the special choice of $x = x_n$ and $\tau = (x_{n+1}-x_n)$, becomes
	\begin{equation}\label{newton}
		 \Jade\,' (x_n)(h)  + \Jade\,''(x_n)(x_{n+1}-x_n,h) = 0 \, \qquad \forall \, h \in \ell^2  \,.
	\end{equation}  
This implicitly defines an iterative procedure,
which is nothing else than Newton's method applied to the optimality condition \eqref{optimality_condition}. Since $\Jade\,''(\cdot,h)$ is continuously invertible around the global minimizer, this method is (locally) well-defined and q-superlinearly convergent (see for example \cite[Corollary~2.1]{Hinze_Ulbrich_Ulbrich_2009}). 

We can again use an (infinite-dimensional) matrix representation to rewrite this iterative procedure into a more familiar form. For this, we first define the `matrices'
	\begin{equation}
		G_\eps(x) := \text{diag}(\eta_\eps' (x_k) )_{k \in \N} \,, 
		\qquad 
		H_\eps(x,w) := \text{diag}(\eta_\eps'' (x_k) w_k )_{k \in \N} \,,
	\end{equation}
which correspond to the gradient and the Hesse matrix of $N_\eps(x)$, and use this to write
	\begin{equation}
		N_\eps'(x) h = G_\eps(x) h  \,,
		\qquad 	
		N_\eps''(x) (w,h) = H_\eps(x,w) h \,.
	\end{equation}
This allows the following matrix representation of the functionals $\Jade \,'(x)$ and $\Jade\,''(x)$
	\begin{equation*}
		\nabla \Jade (x) := 2 G_\eps (x) A^T \kl{AN_\eps (x) - y^\delta }  + 2 \alpha x \,,
	\end{equation*}
	\begin{equation*}
		\nabla^2 \Jade (x) := 2 H_\eps \kl{x,A^T \kl{AN_\eps (x) - y^\delta } } + 2 G_\eps(x) A^T A G_\eps(x) + 2 \alpha I  \,, 
	\end{equation*}
where $I$ denotes the identity matrix, and $\nabla \Jade (x)$ and $\nabla^2 \Jade(x)$ can be seen as the gradient and the Hessian matrix of the functional $\Jade$, respectively. Using these representations, the iterative procedure \eqref{newton} can be rewritten into the more familiar form
	\begin{equation*}
		\nabla \Jade (x_n) + \nabla^2 \Jade (x_n) (x_{n+1}-x_n) = 0 \, .
	\end{equation*}
which is an infinite-dimensional matrix-vector system for the update $(x_{n+1} - x_n)$.

\section{Numerical Examples}\label{sect_numerics}

In this section, we demonstrate the usefulness of our proposed approximation approach on a numerical example problem based on \emph{Computerized Tomography (CT)}. In particular, we focus on how the Newton approach for the minimization of $\Jade$ introduced in Section~\ref{sect_Newton} above performs in comparison to the other methods presented in Section~\ref{section_minimization}.

In the medical imaging problem of CT, one aims to reconstruct the density function $f$ inside an object from measurements of the intensity loss of an X-ray beam sent through it. In the 2D case, for example if one scans a cross-section of the human body, the relationship between the intensity $I_0$ of the beam at the emitter position and the intensity $I_L$ at the detector position is given by \cite{Natterer_2001}
	\begin{equation}\label{tomography_equation} 
		\log I_L(s,w) - \log I_0(s,w) = - 		\int_\R f(sw+tw^\perp ) \, dt \,.
	\end{equation} 
Thus, if one defines the well-known \emph{Radon transform} operator
	\begin{equation*} 
		Rf(s,w) := \int_\R f(sw+tw^\perp ) \, dt \,, 
	\end{equation*} 
the reconstruction problem \eqref{tomography_equation} can be written in the standard form 
	\begin{equation*}
		R f = g \,.
	\end{equation*}
Expressing $f$ in terms of some basis or frame, and noting that typically one considers objects whose density is equal to $0$ on large subparts, the above problem precisely fits into the framework of $\lo$ sparsity regularization considered in this paper.

\subsection{Discretization and Implementation}

In order to obtain a discretized version of problem \eqref{tomography_equation}, we make use of the toolbox AIR TOOLS II by Hansen and Jorgensen \cite{Hansen_Jorgensen_2017}. Therein, the density function $f$ is considered as a piecewise constant function on an $m\times m$ pixel grid (see Figure~\ref{fig_allmethods} for examples). With this, equation \eqref{tomography_equation} can be written in the discretized form
 	\begin{equation}\label{eq_discr}
 		y_i := -\kl{\log I_L^{(i)} - \log I_0^{(i)} } = \sum_{j=1}^{m^2} a_{ij}x_j  
 	\end{equation} 
where the $x_j$ denote the value of $f$ at the $j$-th pixel, $ I_0^{(i)}$ and $I_L^{(i)}$ denote the emitted and detected intensity of the $i$-th ray, respectively, and $a_{ij}$ denotes the length of the path which it travels through within the $j$-th pixel cell. Note that since any given ray only travels through relatively few cells, most of the coefficients $a_{ij}$ are equal to $0$ and thus the matrix $A$ is sparse. Collecting the coefficients $a_{ij}$ into a matrix $A$, equation \eqref{eq_discr} can be written as a matrix-vector equation of the form
	 \begin{equation*} 
	 	Ax = y \, .
	 \end{equation*}

Specifying all required parameters as well as the exact solution which one wants to reconstruct, the toolbox provides both the matrix $A$ and the right-hand side vector $y$. For our purposes, we used the toolbox function \texttt{paralleltomo}, creating a parallel beam tomography problem with (the suggested default values of) $180$ angles and $70$ parallel beams for each of them. For the number of pixels we used $m^2 = 50^2$, which altogether leads to the dimension $12600 \times 2500$ for the matrix $A$. The exact solution (the Shepp-Logan phantom) is depicted in Figure~\ref{fig_allmethods}. In order to obtain noisy data, we used  $y^\delta := y+\bar{\delta} \norm{y}_2  r$, where $r$ is a randomly generated, normed vector, and $\bar{\delta}$ denotes the relative noise level.

The implementation of the methods introduced in Section~\ref{section_minimization} was done in a straightforward way by using their infinite-dimensional matrix representations but for the now finite dimensional matrices. The iterations were stopped using the discrepancy principle \eqref{discrepancy_nonlinear} with the choice $\tau = 1.1$ for all methods. For the approximation parameter $\eps$ in the definition of $\Jade$, we have used the choice $\eps= 10^{-4} \delta$, which is conforming with the theory developed above. The stepsize $\omega$ in ISTA and FISTA was chosen as a constant based on the norm of $A$, and for the gradient descent method \eqref{gradient_descent}, the stepsizes $\omega_n$ were chosen via the Armijo rule. In the Levenberg-Marquardt method \eqref{Levenberg_Marquardt}, we chose $\alpha_n = 0.6^n \delta$, which is a  sequence tending to $0$ in accordance with the convergence theory. All computations were carried out in Matlab on a desktop computer with an Intel Xeon E5-1650 processor with 3.20GHz and 16 GB RAM.

\subsection{Numerical Results}

In this section, we present the results of applying the iterative methods introduced in Section~\ref{section_minimization} to the tomography problem described above.

In the following, we present reconstruction results for different noise levels $\bar{\delta}$, which is directly related to the signal-to-noise ratio (SNR) by 
	\begin{equation*}
		\bar{\delta} 
		=
		\frac{\norm{y-\yd}}{\norm{y}} 
		\approx
		\frac{\norm{y-\yd}}{\norm{\yd}} 
		=
		\text{SNR}^{-1} \,.
\end{equation*}
The first results, which are related to the computational efficiency of the different methods, are presented in Figure~\ref{fig_comparison}. One can clearly see that regardless of the noise level $\bar{\delta}$, the Newton method and the Levenberg-Marquardt method outperform the gradient based methods, both in terms of computation time and number of iterations $n_*$ required to meet the discrepancy principle. Furthermore, as was to be expected, FISTA also performs much better than both ISTA and the gradient descent method. Note also that with the Levenberg-Marquardt and the Newton method, one can satsify the discrepancy principle also for very small noise levels, which becomes infeasible for the other methods due to the too large runtime which would be required for that.

\begin{figure}[h!]
	\includegraphics[width=0.48\textwidth]{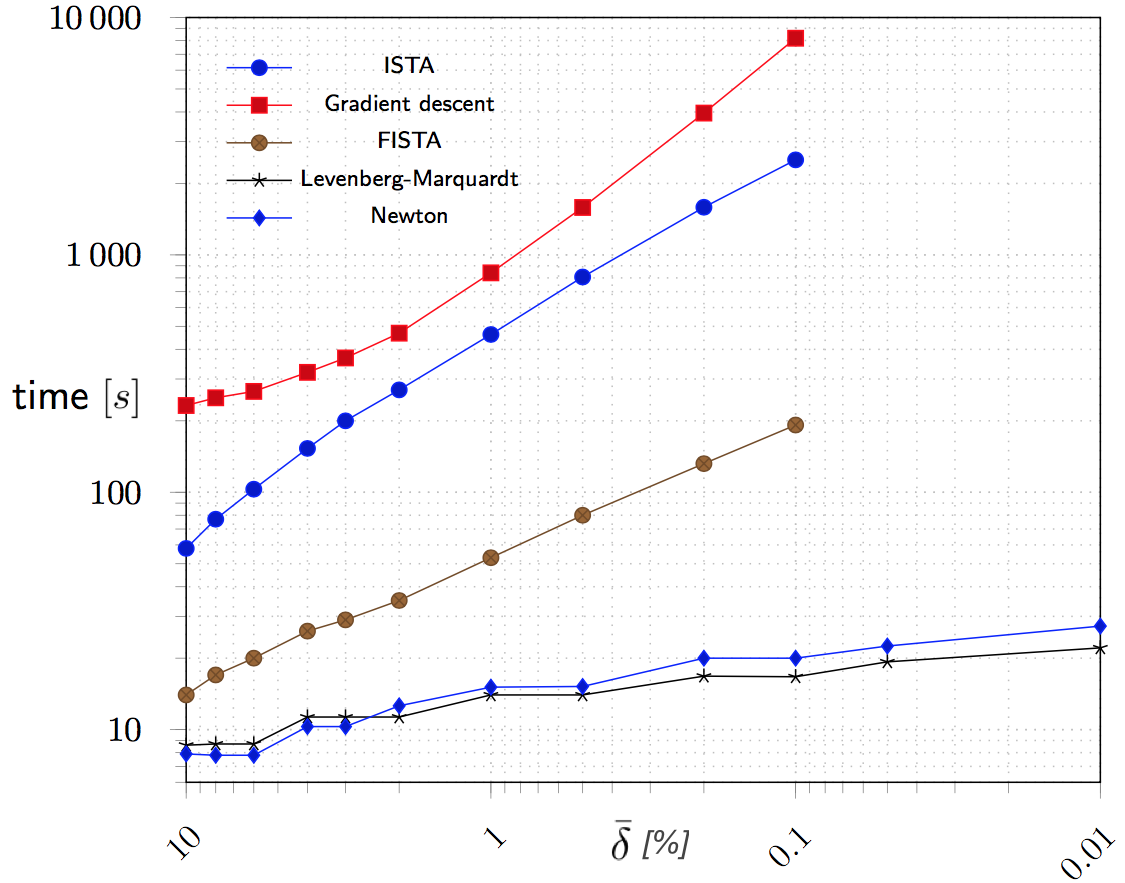} 
	\quad
	\includegraphics[width=0.48\textwidth]{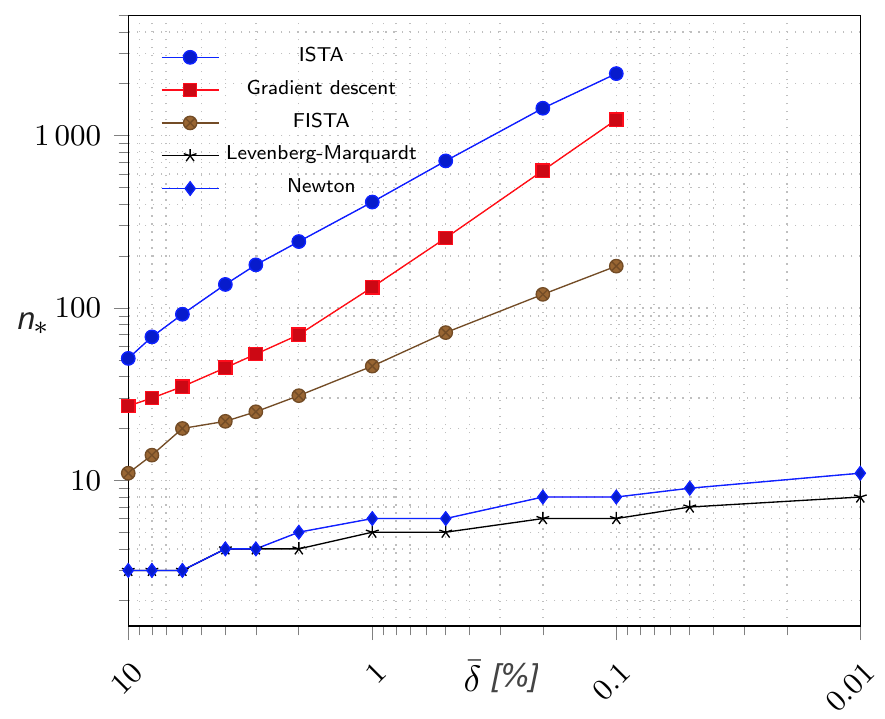} 
	\caption{Elapsed time (left) and number of iterations (right) required for meeting the stopping criterion versus different noise levels, for the considered minimization methods.}
	\label{fig_comparison}
\end{figure}

The results depicted in Figure~\ref{fig_relative_error} show that not only do the Levenberg-Marquardt and the Newton method require less iterations and computation time to satisfy the discrepancy principle, the resulting approximations also have a comparable and even somewhat smaller relative error than for the gradient based methods. This is of course partly due to the fact that each iteration step of those methods is `larger' than in the other methods, which nevertheless turns out to be an advantage in our case. The resulting approximate solutions for $10\%$ relative noise are shown in Figure~\ref{fig_allmethods}. The higher quality of the solutions obtained by the Levenberg-Marquardt and the Newton method is apparent.

\begin{figure}[h!]
	\centering \includegraphics[scale=0.68]{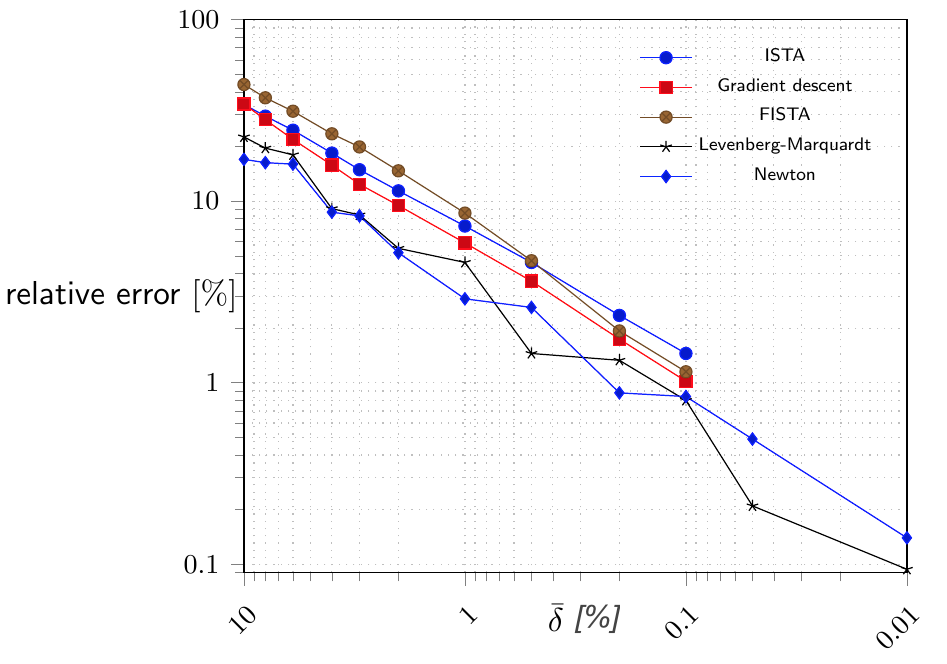} 
	\caption{Relative error $\norm{x_{n_*} - x^\dagger}/\norm{x^\dagger}$ in percent versus different noise levels.}
	\label{fig_relative_error}
\end{figure}

\begin{figure}[h!]
	\centering \includegraphics[width=\textwidth]{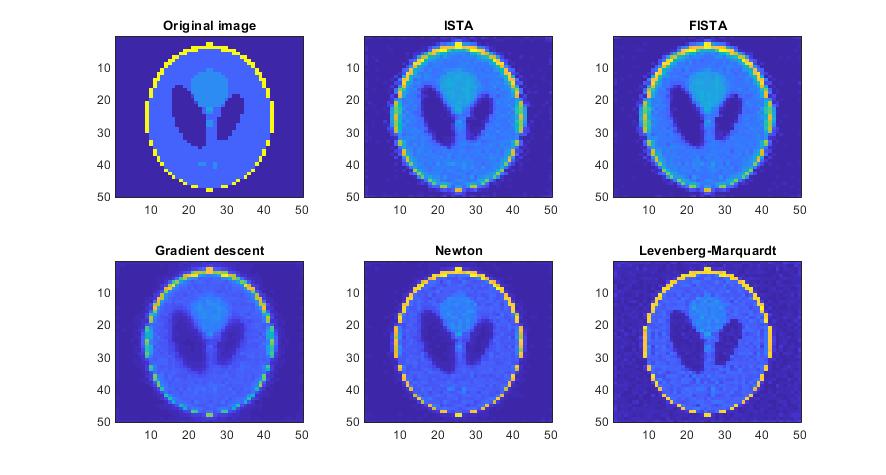} 
	\caption{Exact solution and reconstructions for the noise level $\bar{\delta}=10\%$.}
	\label{fig_allmethods}
\end{figure}

\section{Conclusion}\label{sect_conclusion}

In this paper, we presented a minimization approach for a Tikhonov functional with $\lo$ penalty for the solution of linear inverse problems with sparsity constraints. The employed approximate transformation approach based on a Nemskii operator was mathematically analysed within the framework of ill-posed problems, and the fact that the resulting transformed functional is twice continuously Fr\'echet differentiable served as a basis for the construction of an effective minimization algorithm using Newton's method. Numerical example problems based on the medical imaging problem of computerized tomography demonstrated the usefulness of the proposed approach.

\section{Support}

The authors were funded by the Austrian Science Fund (FWF): F6805-N36.

\bibliographystyle{plain}
{\footnotesize
\bibliography{mybib}

\begin{thebibliography}{10}

\bibitem{Acar_Vogel_1994}
R.~Acar and C.~Vogel.
\newblock Analysis of bounded variation penalty methods for ill-posed problems.
\newblock {\em Inverse Problems}, 10(6):1217--1229, 1994.

\bibitem{Attouch_Peypouquet_2016}
H.~{Attouch} and J.~{Peypouquet}.
\newblock The {R}ate of {C}onvergence of {N}esterov's {A}ccelerated
  {F}orward--{B}ackward {M}ethod is {A}ctually {F}aster {T}han $o(1/k^2)$.
\newblock {\em SIAM Journal on Optimization}, 26(3):1824--1834, 2016.

\bibitem{Beck_Teboulle_2009}
A.~Beck and M.~Teboulle.
\newblock {A Fast Iterative Shrinkage-Thresholding Algorithm for Linear Inverse
  Problems.}
\newblock {\em SIAM J. Imaging Sci.}, 2(1):183--202, 2009.

\bibitem{Blaschke_Neubauer_Scherzer_1997}
B.~Blaschke, A.~Neubauer, and O.~Scherzer.
\newblock {On convergence rates for the Iteratively regularized Gauss-Newton
  method}.
\newblock {\em IMA Journal of Numerical Analysis}, 17(3):421, 1997.

\bibitem{Bonesky_Bredies_Lorenz_Maass_2007}
T.~Bonesky, K.~Bredies, D.~A. Lorenz, and P.~Maass.
\newblock A generalized conditional gradient method for nonlinear operator
  equations with sparsity constraints.
\newblock {\em Inverse Problems}, 23(5):2041--2058, 2007.

\bibitem{Bredies_Lorenz_2008}
K.~Bredies and D.~Lorenz.
\newblock Linear convergence of iterative soft-thresholding.
\newblock {\em Journal of Fourier Analysis and Applications}, 14(5):813--837,
  2008.

\bibitem{Bredies_Lorenz_Maass_2009}
K.~Bredies, D.~A. Lorenz, and P.~Maass.
\newblock A generalized conditional gradient method and its connection to an
  iterative shrinkage method.
\newblock {\em Computational Optimization and Applications}, 42(2):173--193,
  2009.

\bibitem{Buccini_Reichel_2019}
A.~Buccini and L.~Reichel.
\newblock An l2--lp regularization method for large discrete ill-posed
  problems.
\newblock {\em J. Sci. Comput.}, 78(3):1526--1549, 2019.

\bibitem{Candes_Romberg_Tao_2006}
E.~Cand\'es, J.~Romberg, and T.~Tao.
\newblock {Stable Signal Recovery from Incomplete and Inaccurate Measurements}.
\newblock {\em Communications on Pure and Applied Mathematics}, 59, 2006.

\bibitem{Conway_1994}
J.~B. Conway.
\newblock {\em A {C}ourse in {F}unctional {A}nalysis}.
\newblock Graduate Texts in Mathematics. Springer New York, 1994.

\bibitem{Daubechies_Defrise_DeMol_2004}
I.~Daubechies, M.~Defrise, and C.~De Mol.
\newblock An iterative thresholding algorithm for linear inverse problems with
  a sparsity constraint.
\newblock {\em Communications on Pure and Applied Mathematics},
  57(11):1413--1457, 2004.

\bibitem{Daubechies_DeVore_Fornasier_Gunturk_2010}
I.~Daubechies, R.~DeVore, M.~Fornasier, and C.~G\"unt\"urk.
\newblock Iteratively reweighted least squares minimization for sparse
  recovery.
\newblock {\em Communications on Pure and Applied Mathematics}, 63(1):1--38,
  2010.

\bibitem{Donoho_Tanner_2005}
D.~Donoho and J.~Tanner.
\newblock Sparse nonnegative solution of underdetermined linear equations by
  linear programming.
\newblock {\em Proceedings of the National Academy of Sciences of the United
  States of America}, 102 27:9446--51, 2005.

\bibitem{Engl_Hanke_Neubauer_1996}
H.~W. {Engl}, M.~{Hanke}, and A.~{Neubauer}.
\newblock {\em {Regularization of inverse problems.}}
\newblock Dordrecht: Kluwer Academic Publishers, 1996.

\bibitem{Engl_Landl_1993}
H.~W. Engl and G.~Landl.
\newblock {Convergence Rates for Maximum Entropy Regularization}.
\newblock {\em SIAM Journal on Numerical Analysis}, 30(5):1509--1536, 1993.

\bibitem{Engl_Ramlau_2015}
H.~W. Engl and R.~Ramlau.
\newblock {R}egularization of {I}nverse {P}roblems.
\newblock In B.~{E}ngquist, editor, {\em {E}ncyclopedia of {A}pplied and
  {C}omputational {M}athematics}. Springer, 2015.

\bibitem{Griesse_Lorenz_2008}
R.~Griesse and D.~A. Lorenz.
\newblock A semismooth newton method for tikhonov functionals with sparsity
  constraints.
\newblock {\em Inverse Problems}, 24(3):035007, 2008.

\bibitem{Hanke_1997}
M.~Hanke.
\newblock {A regularizing Levenberg - Marquardt scheme, with applications to
  inverse groundwater filtration problems}.
\newblock {\em Inverse Problems}, 13(1):79, 1997.

\bibitem{Hansen_Jorgensen_2017}
P.~C. Hansen and J.~Jorgensen.
\newblock Air tools ii: algebraic iterative reconstruction methods, improved
  implementation.
\newblock {\em Numerical Algorithms}, 79, 11 2017.

\bibitem{Hinze_Ulbrich_Ulbrich_2009}
M.~Hinze, R.~Pinnau, M.~Ulbrich, and S.~Ulbrich.
\newblock {\em Optimization with PDE Constraints}, volume~23, pages xii+270.
\newblock 01 2009.

\bibitem{Huang_Lanza_Morigi_Reichel_Sgallari_2017}
G.~Huang, A.~Lanza, S.~Morigi, L.~Reichel, and F.~Sgallari.
\newblock Majorization–minimization generalized {K}rylov subspace methods for
  $ \ell^p-\ell^q$ optimization applied to image restoration.
\newblock {\em BIT Numerical Mathematics}, 57:351--378, 2017.

\bibitem{Jin_Maass_2012}
B.~Jin and P.~Maass.
\newblock Sparsity regularization for parameter identification problems.
\newblock {\em Inverse Problems}, 28(12):123001, 2012.

\bibitem{Jin_Maass_Scherzer_2017}
B.~Jin, P.~Maa{\ss}, and O.~Scherzer.
\newblock Sparsity regularization in inverse problems.
\newblock {\em Inverse Problems}, 33(6):060301, May 2017.

\bibitem{Jin_2010}
Q.~Jin.
\newblock {On a regularized Levenberg--Marquardt method for solving nonlinear
  inverse problems}.
\newblock {\em Numerische Mathematik}, 115(2):229--259, 2010.

\bibitem{Jin_Tautenhahn_2009}
Q.~Jin and U.~Tautenhahn.
\newblock {On the discrepancy principle for some Newton type methods for
  solving nonlinear inverse problems}.
\newblock {\em Numerische Mathematik}, 111(4):509--558, 2009.

\bibitem{Kaltenbacher_Neubauer_Scherzer_2008}
B.~{Kaltenbacher}, A.~{Neubauer}, and O.~{Scherzer}.
\newblock {\em {Iterative regularization methods for nonlinear ill-posed
  problems.}}
\newblock Berlin: de Gruyter, 2008.

\bibitem{Korolev_Lellmann_2018}
Y.~Korolev and J.~Lellmann.
\newblock Image reconstruction with imperfect forward models and applications
  in deblurring.
\newblock {\em SIAM Journal on Imaging Sciences}, 11(1):197--218, 2018.

\bibitem{Lanza_Morigi_Reichel_Sgallari_2015}
A.~Lanza, S.~Morigi, L.~Reichel, and F.~Sgallari.
\newblock A {G}eneralized {K}rylov {S}ubspace {M}ethod for $\ell^p-\ell^q$
  {M}inimization.
\newblock {\em SIAM J. Scientific Computing}, 37, 2015.

\bibitem{Natterer_2001}
F.~{Natterer}.
\newblock {\em {The Mathematics of Computerized Tomography}}.
\newblock Society for Industrial and Applied Mathematics, 2001.

\bibitem{Nesterov_1983}
Y.~Nesterov.
\newblock {A method of solving a convex programming problem with convergence
  rate $O(1/k^2)$.}
\newblock {\em Soviet Mathematics Doklady}, 27(2):372--376, 1983.

\bibitem{Neubauer_1989}
A.~{Neubauer}.
\newblock {T}ikhonov regularization for nonlinear ill-posed problems: optimal
  convergence and finite-dimensional approximation.
\newblock 5:541--557, 1989.

\bibitem{Neubauer_2017_2}
A.~Neubauer.
\newblock A {N}ew {G}radient {M}ethod for {I}ll-{P}osed {P}roblems.
\newblock {\em Numerical Functional Analysis and Optimization}, 0(0):1--26,
  2017.

\bibitem{Poeschl_Resmerita_Scherzer_2010}
C.~P{\"o}schl, E.~{Resmerita}, and O.~{Scherzer}.
\newblock Discretization of variational regularization in {B}anach spaces.
\newblock {\em Inverse Problems}, 26(10):105017, 2010.

\bibitem{Ramlau_2003}
{R. Ramlau}.
\newblock {TIGRA - an iterative algorithm for regularizing nonlinear ill-posed
  problems}.
\newblock {\em Inverse Problems}, 19(2):433, 2003.

\bibitem{Ramlau_Resmerita_2010}
R.~Ramlau and E.~Resmerita.
\newblock Convergence rates for regularization with sparsity constraints.
\newblock {\em Electronic transactions on numerical analysis ETNA}, 37:87--104,
  01 2010.

\bibitem{Ramlau_Teschke_2005}
R.~Ramlau and G.~Teschke.
\newblock Tikhonov replacement functionals for iteratively solving nonlinear
  operator equations.
\newblock {\em Inverse Problems}, 21(5):1571--1592, 2005.

\bibitem{Ramlau_Teschke_2006}
R.~{Ramlau} and G.~{Teschke}.
\newblock A {T}ikhonov-based projection iteration for nonlinear ill-posed
  problems with sparsity constraints.
\newblock {\em Numerische Mathematik}, 104(2):177--203, 2006.

\bibitem{Ramlau_Teschke_2010}
R.~Ramlau and G.~Teschke.
\newblock Sparse recovery in inverse problems.
\newblock In M.~Fornasier, editor, {\em Theoretical Foundations and Numerical
  Methods for Sparse Recovery, Radon Series on Computational and Applied
  Mathematics 9}, pages 201--262. De Gruyter, Berlin, 2010.

\bibitem{Ramlau_Zarzer_2012}
R.~Ramlau and C.~Zarzer.
\newblock On the minimization of a {T}ikhonov functional with a non-convex
  sparsity constraint.
\newblock {\em ETNA. Electronic Transactions on Numerical Analysis [electronic
  only]}, 39, 2012.

\bibitem{Resmerita_2005}
E.~Resmerita.
\newblock {Regularization of ill-posed problems in Banach spaces: Convergence
  rates}.
\newblock {\em Inverse Problems}, 21, 08 2005.

\bibitem{Saxenhuber_2016}
D.~Saxenhuber.
\newblock Gradient-based reconstruction algorithms for atmospheric tomography
  in adaptive optics systems for extremely large telescopes, 2016.

\bibitem{Scherzer_Grasmair_Grossauer_Haltmeier_Lenzen_2008}
O.~Scherzer, M.~Grasmair, H.~Grossauer, M.~Haltmeier, and F.~Lenzen.
\newblock {\em {V}ariational {M}ethods in {I}maging}.
\newblock Applied Mathematical Sciences. Springer New York, 2008.

\bibitem{Hohage_Sprung_Weidling_2020}
F.~Weidling, B.~Sprung, and T.~Hohage.
\newblock {Optimal Convergence Rates for Tikhonov Regularization in Besov
  Spaces}.
\newblock {\em SIAM Journal on Numerical Analysis}, 58(1):21--47, 2020.

\bibitem{Zarzer_2009}
C.~Zarzer.
\newblock {On Tikhonov regularization with non-convex sparsity constraints}.
\newblock {\em Inverse Problems}, 25:025006, 2009.

\end{thebibliography}
}

\end{document}